\documentclass[12pt]{amsart}
\usepackage{amsmath,amsfonts,amssymb,amsthm,epsfig,color, tikz}
\usepackage{mathrsfs}
\newcommand{\bb}{\mathbb}

\newcommand{\C}{\bb C}

\newcommand{\Z}{\bb Z}
\newcommand{\R}{\bb R}

\newcommand{\eps}{\epsilon}

\newcommand{\cH}{\mathcal{H}}

\newcommand{\cO}{\mathcal{O}}

\newcommand{\cS}{\mathcal{S}}

\newcommand{\bS}{\mathbb{S}}
\newcommand{\bR}{\mathbb{R}}




\newcommand{\onto}{\xymatrix{\ar@{>>}[r]&}}
\newcommand{\da}[4]{\xymatrix{#1 \ar@<.5ex>[r]^{#2} \ar@<-.5ex>[r]_{#3} & #4}}

\newif\ifdraft\drafttrue


\newcommand{\SLR}[1][2]{\operatorname{SL}_{#1}(\bR)}



\newcommand\on[1]{{\operatorname{#1}}} 
\newcommand\diag[1]{\operatorname{diag}\left(#1\right)}

\newcommand{\acts}{\hspace{-1pt}\mbox{\raisebox{1.3pt}{\text{\huge{.}}}}\hspace{-1pt}}

\newcommand{\id}{\mathbbm{1}}

\newcommand{\Lie}{\on{Lie}}
\usepackage{bbm}

\newcommand{\prim}{\operatorname{prim}}

\newcommand{\om}{\omega}

\newcommand{\hh}{\mathcal H}

\newcommand{\minuszero}{\backslash\{0\}}
\newcommand{\La}{\Lambda}
\newtheorem{Theorem}{Theorem}
\numberwithin{Theorem}{section}
\newtheorem{Cor}[Theorem]{Corollary}

\newtheorem{lemma}[Theorem]{Lemma}

\newtheorem*{question*}{Question}
\makeatletter\let\@wraptoccontribs\wraptoccontribs\makeatother
\newtheorem*{lemma*}{Lemma}

\newtheorem*{theorem*}{Theorem}

\numberwithin{equation}{section}
\numberwithin{Def}{section}

\theoremstyle{remark}
\newtheorem{remark}{Remark}

\begin{document}
\title{Siegel-Veech transforms are in $L^2$}
\author{Jayadev S.~Athreya}
\author{Yitwah Cheung}
\author{Howard Masur}

\email{jathreya@uw.edu}
\email{ycheung@sfsu.edu}
\email{masur@math.uchicago.edu}
\address{Department of Mathematics, University of Washington, Padelford Hall, Seattle, WA 98195, USA}
\address{Department of Mathematics, San Francisco State University, Thornton Hall 937, 1600 Holloway Ave, San Francisco, CA 94132, USA}
\address{Department of Mathematics, University of Chicago, 5734 South University Avenue, Chicago, IL 60615, USA}
    \thanks{J.S.A. partially supported by NSF CAREER grant DMS 1559860}
    \thanks{Y.C. partially supported by NSF DMS 1600476}
    \thanks{H.M. partially supported by  NSF DMS 1607512}
    \contrib[with an appendix by]{Jayadev S.~Athreya and Rene R\"uhr}
\address{Department of Mathematics 
University of Toronto 
Room 6290, 40 St. George Street 
Toronto, ON, M5S 2E4 Canada.
}
\email{anja@math.toronto.edu}
\maketitle

\begin{center}
\textit{Dedicated to the memory of William Veech.}
\end{center}

\begin{abstract} Let $\hh$ denote a connected component of a stratum of translation surfaces. We show that the Siegel-Veech transform of a bounded compactly supported function on $\R^2$ is in $L^2(\hh, \mu)$, where $\mu$ is Lebesgue measure on $\hh$, and give applications to bounding error terms for counting problems for saddle connections. We also propose a new invariant associated to $SL(2, \R)$-invariant measures on strata satisfying certain integrability conditions.
\end{abstract}

\section{Introduction}\label{sec:intro} Motivated by counting problems for polygonal billiards and more generally for linear flows on surfaces, Veech~\cite{VeechSiegel} introduced what is now known as the \emph{Siegel-Veech transform} on the moduli space of abelian differentials (in analogy with the Siegel transform arising from the space of unimodular lattices in $\R^n$). The main result of~\cite{VeechSiegel} is an integration ($L^1$) formula for this transform (see \S\ref{sec:SVformula}), a version of the classical Siegel integral formula. 

Our main result Theorem~\ref{theorem:main} is that the Siegel-Veech transform $\widehat{f}$ of any bounded compactly supported function $f$ on $\R^2$ satisfies $$\widehat{f} \in L^2(\hh, \mu)$$ with respect to the natural Lebesgue measure $\mu$ on any (connected component of a) stratum $\hh$ of abelian differentials.  It seems an interesting question what the closure of the set of $\widehat{f}$ is in $L^2(\hh, \mu)$.  

\subsection{Translation surfaces}\label{sec:translation} A \emph{translation surface} $S$ is a pair $(X, \omega)$ where $X$ is a Riemann surface and $\omega$ is a holomorphic $1$-form. The terminology is motivated by the fact that integrating $\omega$ (away from its zeros) gives an atlas of charts to $\C$ whose transition maps are translations. These can be viewed as singular flat metrics with trivial rotational holonomy, with isolated cone-type singularities corresponding to zeros of $\omega$.  An \emph{saddle connection} $\gamma$ on $S$ is a geodesic segment connecting two zeros of $\omega$ with none in its interior. Associated to each saddle connection is its \emph{holonomy vector} $$z_{\gamma} = \int_{\gamma} \omega \in \C$$ and its length
$$|\gamma|=\int_{\gamma}|\omega|.$$  We denote the set of holonomy vectors by $\La_{\omega}$. $\La_{\om}$ is a discrete subset of the plane $\C \sim \R^2$.

\subsection{Strata}\label{sec:strata} The moduli space $\Omega_g$ of genus $g$ translation surfaces is the bundle over the moduli space $\mathcal M_g$ of genus $g$ Riemann surfaces with fiber over each Riemann surface $X$ given by $\Omega(X)$, the vector space of holomorphic 1-forms on $X$. $\Omega_g$ decomposes into \emph{strata} depending on the combinatorics of the differentials. 

Since the orders of the zeros of $\omega$ must sum to $2g-2$, there is a stratum associated to each integer partition of $2g-2$. Each of these strata has at most three connected components~\cite{KZ}. 

The flat metric associated to  a one-form $\omega$ also gives a notion of area on the surface.  We consider the subset of \emph{area 1} surfaces of a connected component of a stratum, and denote it by $\hh$. We will, by abuse of notation, often simply refer to this as a stratum, and we will denote elements of it by $(X,\omega)$.

\subsubsection{Lebesgue measure}\label{sec:lebesgue} The group $GL(2, \R)$ acts on $\Omega_g$ via linear post-composition with charts, preserving combinatorics of differentials. The subgroup $SL(2,\R)$ preserves each area 1 subset, so acts on each stratum $\hh$. On each stratum, there is a natural measure $\mu$, known as \emph{Masur-Veech} or Lebesgue measure, constructed using \emph{period coordinates} on strata (see, e.g.~\cite{Eskinhandbook} or~\cite{Zorichsurvey} for a nice exposition of the construction of this measure). A crucial result, independently shown by W.~Veech and the H.~Masur, is

\begin{theorem*}\cite{MasurErgodic, VeechErgodic} $\mu$ is a finite $SL(2,\R)$-invariant ergodic measure on each stratum $\hh$.

\end{theorem*}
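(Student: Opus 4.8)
The plan is to establish the three assertions—invariance, finiteness, and ergodicity—separately, in increasing order of difficulty. For \textbf{invariance} I would argue directly in period coordinates. Fixing the set $\Sigma$ of zeros of $\om$, the map sending $(X,\om)$ to the relative cohomology class $[\om]\in H^1(X,\Sigma;\C)\cong\C^N$, with $N=2g+|\Sigma|-1$, is a local homeomorphism, and the transition maps between two such charts are induced by changes of basis of $H_1(X,\Sigma;\Z)$, hence lie in $GL(N,\Z)$ and preserve Lebesgue measure; this is what makes the ambient measure well defined on $\Om_g$. The $GL(2,\R)$-action is post-composition on the $\C\cong\R^2$ target, so under $\C^N\cong(\R^2)^N$ it acts diagonally and linearly with Jacobian $(\det g)^N$; thus $SL(2,\R)$ preserves the ambient Lebesgue measure. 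The area is an $SL(2,\R)$-invariant quadratic form in these coordinates, and $SL(2,\R)$ commutes with the scaling action $(X,\om)\mapsto(X,c\om)$, so the cone construction that normalizes the radial (scaling) direction descends the ambient measure to an $SL(2,\R)$-invariant measure $\mu$ on the area-$1$ slice $\hh$.

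For \textbf{finiteness} I would split $\hh$ into its thick part, where the shortest saddle connection has length at least $\eps$, and the complementary thin part. By a Mumford-type compactness criterion the thick part is compact and therefore has finite $\mu$-measure. For the thin part I would cover it by finitely many period-coordinate charts and estimate the volume of the locus where a short saddle connection (or a collection of them) appears; the expected bound is of order $\eps^2$, obtained by enumerating the ways a surface can degenerate and bounding the volume of each degeneration locus. Adding the two contributions yields $\mu(\hh)<\infty$. An equivalent route is Veech's zippered-rectangle parametrization, under which $\hh$ is realized up to measure zero as a suspension over the space of interval-exchange length data indexed by a Rauzy class, so that $\mu(\hh)$ becomes a convergent integral of the suspension (height) function over a finite union of simplices.

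The main obstacle is \textbf{ergodicity}. Since any $SL(2,\R)$-invariant set is in particular invariant under the diagonal (Teichm\"uller geodesic) flow $g_t=\diag{e^t,e^{-t}}$, it suffices to prove that $g_t$ is ergodic for $\mu$. I would run a Hopf argument: the strong stable and unstable leaves of $g_t$ are the leaves along which the horizontal, respectively vertical, foliation of the flat structure is held fixed, and these foliations are absolutely continuous. One shows that Birkhoff averages of a continuous compactly supported observable are constant along stable leaves (forward contraction) and along unstable leaves (backward contraction), and then a Fubini argument using absolute continuity of the foliations forces the averages to be almost everywhere constant. The delicate points—where the real work lies—are (i) the non-uniform hyperbolicity and the singularities of the flat metric, which force the Hopf argument onto a recurrent part of the dynamics, and (ii) the non-compactness of $\hh$, which requires Masur's recurrence criterion that almost every geodesic returns infinitely often to a fixed compact set, a fact leaning on the finiteness established above. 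Alternatively, following Veech, one reduces ergodicity of $g_t$ to ergodicity and conservativity of the Rauzy--Veech renormalization map on interval-exchange data, proving the latter by a Gauss-map-style argument; in either approach, making the hyperbolic or renormalization mechanism rigorous is the crux.
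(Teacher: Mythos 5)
This statement is not proved in the paper: it is quoted as background, with the proof deferred entirely to the cited works of Masur and Veech, so there is no internal argument to compare yours against. What can be assessed is whether your outline is a faithful roadmap of how those references (and their successors) establish the result, and on that score it is essentially accurate. The invariance argument via period coordinates --- transition maps in $GL(N,\Z)$, the diagonal linear action of $SL(2,\R)$ on $(\R^2)^N$ with Jacobian $(\det g)^N=1$, and the cone construction on the area-one hypersurface --- is complete as written. For finiteness and ergodicity your two alternative routes correspond to the two standard proofs: the zippered-rectangle/Rauzy--Veech suspension picture is what Masur and Veech actually used in 1982, while the thick--thin volume estimate and the Hopf argument are later, more geometric treatments.

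Be aware, though, that everything after the invariance paragraph is a plan rather than a proof; the steps you label ``delicate'' are the entire content of the cited papers. Two of them deserve a concrete warning. First, your thin-part bound of order $\eps^2$ for the locus with a short saddle connection is a statement about $\mu$-measure and cannot be invoked before finiteness is known; the non-circular version either works chart by chart with an induction on the combinatorics of the degeneration (as in Masur--Smillie), or avoids the issue by exhibiting $\mu(\hh)$ as an explicit convergent integral over the Rauzy simplices --- your second route is the cleaner one. Second, in the Hopf argument the absolute continuity of the stable and unstable foliations is actually the easy part (they are affine in period coordinates); the crux is recurrence of almost every Teichm\"uller geodesic to a fixed compact set together with the non-uniformity of the hyperbolicity, and recurrence itself rests on finiteness plus a quantitative non-divergence statement, so the logical order finiteness, then recurrence, then ergodicity must be respected. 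With those caveats your outline matches the proofs in the literature.
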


\subsection{Siegel-Veech transforms}\label{sec:SV} Fix a stratum $\hh$. Let $B_c(\R^2)$ denote the space of Borel measurable bounded compactly supported functions on $\R^2 \cong \C$. Given $(X,\omega) \in \hh$ and $f \in B_c(\R^2)$, Veech~\cite{VeechSiegel} introduced the \emph{Siegel-Veech transform} $$\widehat{f}(X,\omega) = \sum_{v \in \La_{\omega}} f(v).$$ Note that this is a \emph{finite} sum for any fixed $f$ and $\omega$, since $\La_{\om}$ is discrete. Our main result is:

\begin{Theorem}\label{theorem:main} Let $f \in B_c(\R^2)$. Then $\widehat{f} \in L^2(\hh, \mu)$.

\end{Theorem}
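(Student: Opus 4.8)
The plan is to reduce the statement to a second-moment estimate for the saddle-connection counting function, and then to control that estimate through the geometry of pairs of short saddle connections. Since $f \in B_c(\R^2)$, fix $M > 0$ and $R > 0$ with $|f| \le M$ and $\supp f$ contained in the disc $B_R$ of radius $R$ about the origin. Pointwise one then has $|\widehat f(X,\om)| \le M\, N_R(X,\om)$, where $N_R(X,\om) = \#\{v \in \La_\om : |v| \le R\}$ is the number of holonomy vectors of length at most $R$. It therefore suffices to prove that $N_R \in L^2(\hh,\mu)$, i.e.\ that $\int_\hh N_R^2 \, d\mu < \infty$; this target is cleaner because it discards the analytic features of $f$ and isolates the purely geometric question of how many saddle connections can be short simultaneously.

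Next I would expand the square into its diagonal and off-diagonal parts,
$$N_R(X,\om)^2 = N_R(X,\om) + \#\{(v,w) \in \La_\om \times \La_\om : v \ne w,\ |v|,|w| \le R\}.$$
The diagonal term is just $N_R = \widehat{\id_{B_R}}$, which lies in $L^1(\hh,\mu)$ by Veech's integration formula and is therefore harmless. Everything thus rests on bounding the integral over $\hh$ of the off-diagonal pair count, which upon unfolding equals a sum over (homotopy classes of) ordered pairs of distinct saddle connections $(\gamma_1,\gamma_2)$ of the $\mu$-measure of the set of surfaces on which both $z_{\gamma_1}$ and $z_{\gamma_2}$ have length at most $R$.

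To organize this sum I would group the pairs according to whether the two saddle connections are geometrically independent or correlated. For independent pairs the events $\{|z_{\gamma_1}| \le R\}$ and $\{|z_{\gamma_2}| \le R\}$ are essentially decorrelated in period coordinates, so that their joint contribution behaves like the square of the first moment $\int_\hh N_R \, d\mu$ and converges. The correlated pairs --- most importantly families of parallel saddle connections bounding or crossing a common thin cylinder, where shortness of one component forces the others to be short as well --- are the dangerous ones, since a single cylinder carrying $k$ parallel saddle connections contributes on the order of $k^2$ simultaneously short pairs. This is the step I expect to be the main obstacle. The way I would attack it is to estimate directly the measure of the thin part: using the area-one normalization and the local form of $\mu$ as Lebesgue measure in period coordinates near a boundary degeneration, I would try to show that the measure of the locus of surfaces carrying a cylinder of small circumference decays quadratically in that circumference, and more generally that $\mu(N_R \ge k)$ decays faster than $k^{-2}$. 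Since $\int_\hh N_R^2 \, d\mu = \sum_{k \ge 1} (2k-1)\, \mu(N_R \ge k)$, such a tail bound forces the integral to converge, and combined with the reduction of the first step this yields $\widehat f \in L^2(\hh,\mu)$.
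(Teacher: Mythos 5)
Your reduction to a second-moment (equivalently, tail) bound for the counting function $N_R$ is exactly the paper's starting point, and your identification of thin cylinders carrying many parallel saddle connections as the dangerous configuration is the right intuition. But the proposal stops precisely where the real difficulty begins. The estimate you say you would ``try to show'' --- that $\mu(N_R \ge k)$ decays faster than $k^{-2}$ --- is the entire content of the theorem, and the mechanism you offer (quadratic decay, in the circumference, of the measure of the locus with a short saddle connection) is quantitatively insufficient on its own. The only a priori link between a large count and a short saddle connection is the Eskin--Masur bound $\widehat f(X,\omega)\le C|\gamma|^{-(1+\delta)}$, where $\gamma$ is the shortest saddle connection; so $N_R\ge k$ only forces $|\gamma|\lesssim k^{-1/(1+\delta)}$, and combined with $\mu\{|\gamma|\le\ell\}=O(\ell^2)$ this yields $\mu(N_R\ge k)=O(k^{-2/(1+\delta)})$, which is \emph{not} $o(k^{-2})$: the sum $\sum_k k\,\mu(N_R\ge k)$ diverges. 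This is exactly the barrier at which the naive argument stalls, giving only $L^q$ for $q<2$.

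The missing ingredients are the ones the paper supplies to beat this exponent by a case analysis on the degenerating geometry. If the second-shortest nonhomologous saddle connection is also short (length at most $|\gamma|^p$), two independent period coordinates are constrained and the measure improves to $O(|\gamma|^{2+2p})$. If not, and $\gamma$ does not bound a cylinder, a combinatorial argument via the $L^1$ Delaunay triangulation and Chew's theorem (Lemma~\ref{lemma:chew} together with Lemma~\ref{lemma:5crossing}) shows that a Delaunay path representing any saddle connection can follow $\gamma$ only boundedly many times in succession, so the count is controlled by the \emph{second} shortest length $\epsilon(X,\omega)>|\gamma|^p$; this forces $|\gamma|$ to be far smaller than $k^{-1/(1+\delta)}$ and restores summability. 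If $\gamma$ does bound a thin cylinder, the Masur--Smillie estimate gives the stronger bound $O(\epsilon_0|\gamma|^3)$ because both the core curve and a crossing saddle connection are constrained. Separately, your ``independent pairs decorrelate'' step is not justified as stated: saddle connections are not indexed by a set that is fixed across the stratum, so unfolding the off-diagonal sum over ``homotopy classes of pairs'' is not well defined as it would be for lattices, and distinct saddle connections can be homologous, in which case their periods coincide and nothing decorrelates. As written, the proposal reproduces the framework of the proof but not its substance.
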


\medskip
\noindent This corrects a mistake in~\cite{AthreyaChaika}, which claimed that if $f$ was the indicator function of the unit disk, that $\widehat{f} \notin L^2$. In fact, the proof in~\cite{AthreyaChaika} only shows $\widehat{f} \notin L^3$.



\subsubsection{Siegel-Veech formulas}\label{sec:SVformula} Veech~\cite{VeechSiegel} showed $\widehat{f} \in L^1(\hh, \mu)$, and using the $SL(2, \R)$-invariance of $\mu$ and a classification of the $SL(2, \R)$-invariant measures on $\R^2$, concluded 

\begin{theorem*}\cite{VeechSiegel} There is a constant $c = c(\mu)$ so that $$\int_{\hh} \widehat{f} d\mu = c \int_{\R^2} f dm,$$ where $m$ is Lebesgue measure on $\R^2$.

\end{theorem*}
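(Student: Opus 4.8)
The plan is to realize the left-hand side as a positive linear functional on functions on $\R^2$, show it is $SL(2,\R)$-invariant, and then invoke the classification of $SL(2,\R)$-invariant Radon measures on the plane. First, note that $f \mapsto \Phi(f) := \int_\hh \widehat f\, d\mu$ is well-defined and finite for every $f \in B_c(\R^2)$: since $\mu$ is finite by the theorem of Masur and Veech and $\widehat f \in L^2(\hh,\mu)$ by Theorem~\ref{theorem:main}, Cauchy--Schwarz gives $\int_\hh |\widehat f|\, d\mu \le \|\widehat f\|_{L^2}\,\mu(\hh)^{1/2} < \infty$, so $\widehat f \in L^1(\hh,\mu)$. (This $L^1$ bound is Veech's original input and can also be obtained directly.) The functional $\Phi$ is manifestly linear, and positive, since $f \ge 0$ forces $\widehat f \ge 0$ pointwise.

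The crux is equivariance. The $SL(2,\R)$-action on $\hh$ transforms holonomy vectors linearly, $\La_{g\om} = g\,\La_\om$ for $g \in SL(2,\R)$ and $(X,\om) \in \hh$, so writing $(g^*f)(w) = f(gw)$ one computes
$$\widehat f(g\cdot(X,\om)) = \sum_{v\in\La_{g\om}} f(v) = \sum_{w\in\La_\om} f(gw) = \widehat{g^*f}(X,\om).$$
Integrating over $\hh$ and using the $SL(2,\R)$-invariance of $\mu$ then yields $\Phi(f) = \Phi(g^*f)$ for every $g \in SL(2,\R)$.

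Next I would restrict $\Phi$ to $C_c(\R^2)$ and apply the Riesz--Markov--Kakutani theorem to produce a Radon measure $\nu$ on $\R^2$ with $\Phi(f) = \int_{\R^2} f\, d\nu$; local finiteness of $\nu$ follows from the $L^1$ bound. The identity $\Phi(g^*f) = \Phi(f)$ translates into $g_*\nu = \nu$, so $\nu$ is $SL(2,\R)$-invariant. Since $SL(2,\R)$ acts transitively on $\R^2\setminus\{0\}$ with unimodular stabilizer (conjugate to the unipotent group $\smallmat{1&t\\0&1}$), its invariant Radon measures there are exactly the scalar multiples of Lebesgue measure $m$, whence $\nu = c\,m + a\,\delta_0$ for some $c,a \ge 0$. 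The atom is excluded because $\widehat f$ never sees the origin: as $0 \notin \La_\om$, choosing bumps $f_n \in C_c(\R^2)$ with $\mathbf 1_{\{0\}} \le f_n \le f_0$ and supports shrinking to $\{0\}$ gives $\widehat{f_n} \to 0$ pointwise (only finitely many $v\in\La_\om$ ever meet $\supp f_0$, and each is nonzero), so $a = \nu(\{0\}) \le \Phi(f_n) \to 0$ by dominated convergence against the majorant $\widehat{f_0} \in L^1$. Hence $\nu = c\,m$, giving $\Phi(f) = c\int_{\R^2} f\, dm$ on $C_c(\R^2)$; extending to all $f \in B_c(\R^2)$ is a routine monotone/dominated convergence argument with $\widehat{f_0}$ again serving as the integrable majorant.

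The main obstacle is the integrability $\widehat f \in L^1(\hh,\mu)$: everything afterward is soft (equivariance plus the classification of invariant measures). Here we obtain it for free from Theorem~\ref{theorem:main}, but it is worth emphasizing that this is exactly the step requiring the geometry and dynamics of the stratum — control on the measure of surfaces carrying many short saddle connections — and is the essential analytic content. A secondary technical point is ruling out the atom at the origin and passing from $C_c(\R^2)$ to $B_c(\R^2)$, both handled by dominated convergence against the fixed majorant $\widehat{f_0}$.
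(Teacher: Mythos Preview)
Your argument is correct and follows essentially the same route the paper attributes to Veech: establish $\widehat f\in L^1(\hh,\mu)$, observe that $f\mapsto\int_\hh\widehat f\,d\mu$ is a positive $SL(2,\R)$-invariant linear functional, and invoke the classification of $SL(2,\R)$-invariant Radon measures on $\R^2$. The only deviation is your source for the $L^1$ bound: you deduce it from Theorem~\ref{theorem:main} via Cauchy--Schwarz, whereas the paper notes that Veech obtained it directly from Masur's quadratic upper bounds on $N(\omega,R)$ --- you flag this yourself, so no issue, though it is worth remembering that Veech's theorem historically and logically precedes the $L^2$ result and does not depend on it.
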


This is a generalization of the \emph{Siegel integral formula} \cite{Siegel}, which applies to averages of similar transforms over spaces of unimodular lattices.

\subsubsection{Siegel-Veech constants}\label{sec:SVconstants} In fact, Veech showed (using Masur's quadratic upper bounds~\cite{MasurCounting}) that for \emph{any} $SL(2,\R)$-invariant ergodic finite measure $\lambda$, that $\widehat{f} \in L^1(\hh, \lambda)$, so there is $c = c_{SV}(\lambda)$ so that $$\int_{\hh} \widehat{f} d\lambda = c \int_{\R^2} f dm.$$ He called these measures \emph{Siegel measures} on $\hh$. These constants $c_{SV}(\lambda)$ are known as \emph{Siegel-Veech constants} and are important numerical invariants associated to $SL(2, \R)$-invariant measures.

\subsection{Siegel-Veech measures}\label{sec:SVmeasures}  For any measure $\lambda$  
with $\widehat{f} \in L^2(\hh, \lambda)$, we can define two measure-valued invariants. First, we extend the notion of Siegel-Veech transform to $B_c(\R^4)$, viewing $\R^4 = \R^2 \times \R^2$. 

\subsubsection{Generalized Siegel-Veech transforms} Given $$h \in B_c(\R^4) = B_c(\R^2 \times \R^2).$$ define the \emph{Siegel-Veech transform} $$\widehat{h}(\omega) = \sum_{v_1, v_2 \in \Lambda_{\omega}} h(v_1, v_2).$$ Note that if $h(x,y) = f(x)f(y)$ for $f \in B_c(\R^2)$, $$\widehat{h} = (\widehat{f})^2.$$ 
\subsubsection{Measure-valued invariants}
\begin{Theorem}\label{theorem:spectral} Let $\lambda$ denote an $SL(2,\R)$-invariant measure on $\hh$ so that for any $f \in B_c(\R^2)$, $\widehat{f} \in L^2(\hh, \lambda)$. Let $\kappa$ denote normalized Haar measure on $SL(2,\R)$.  Then there exist \emph{Siegel-Veech measures} $\nu = \nu(\lambda)$ on $\R\minuszero$ and $\eta = \eta(\lambda)$ on $\mathbb P^1 (\R) =  \R \cup \infty$ such that for any $h \in B_c(\R^4)$, \begin{eqnarray*} \int_{\mathcal H} \widehat{h}(\omega) d\lambda(\omega) &=& \int_{\R\minuszero} \left(\int_{SL(2, \R)} h(tx, y) d\kappa(x,y) \right) d\nu(t) \\ &+ &\int_{\mathbb P^1(\R)} \left(\int_{\R^2} h(x, sx) dx\right) d\eta(s).\\ \end{eqnarray*}
\end{Theorem}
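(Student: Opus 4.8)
The plan is to realize the left-hand side as integration against a single $SL(2,\R)$-invariant Radon measure on $(\R^2\minuszero)\times(\R^2\minuszero)$ and then to classify such measures by their orbit structure. Define the linear functional $L(h)=\int_\hh \widehat h\, d\lambda$. First I would check that $L$ extends to a positive, locally bounded functional on $C_c\big((\R^2\minuszero)^2\big)$: if $h$ is supported in $K_1\times K_2$ with $|h|\le M$, then pointwise $|\widehat h(\omega)|\le M\,\widehat{\mathbf 1_{K_1}}(\omega)\,\widehat{\mathbf 1_{K_2}}(\omega)$, so by Cauchy--Schwarz $\int_\hh|\widehat h|\,d\lambda\le M\,\|\widehat{\mathbf 1_{K_1}}\|_{L^2(\lambda)}\|\widehat{\mathbf 1_{K_2}}\|_{L^2(\lambda)}<\infty$, using the hypothesis that the Siegel--Veech transform of each $B_c(\R^2)$-function lies in $L^2(\hh,\lambda)$. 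Since $h\ge 0$ forces $\widehat h\ge 0$ and hence $L(h)\ge 0$, the Riesz representation theorem produces a Radon measure $\rho$ on $(\R^2\minuszero)^2$ with $L(h)=\int h\,d\rho$. Because every $v\in\La_\omega$ is nonzero, $\widehat h$ depends only on the restriction of $h$ to $(\R^2\minuszero)^2$, so $\rho$ genuinely lives there.

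Next I would record that $\rho$ is invariant under the diagonal $SL(2,\R)$-action on $\R^2\times\R^2$. Since $\La_{g\omega}=g\La_\omega$, writing $(g\cdot h)(x,y)=h(g^{-1}x,g^{-1}y)$ gives $\widehat{g\cdot h}(\omega)=\widehat h(g^{-1}\omega)$, and the $SL(2,\R)$-invariance of $\lambda$ then yields $L(g\cdot h)=L(h)$; hence $\rho$ is invariant. The main step, and the one I expect to be the crux, is to classify the invariant Radon measures on $(\R^2\minuszero)^2$. I would split the space into the two invariant pieces $U=\{(v_1,v_2):\det(v_1\,|\,v_2)\ne 0\}$ and $P=\{(v_1,v_2):v_1,v_2\ \text{linearly dependent}\}$, and decompose $\rho=\rho|_U+\rho|_P$. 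On $U$ the action is free with closed orbits, the orbit through $(v_1,v_2)$ being exactly the level set $O_t=\{\det=t\}$ with $t=\det(v_1\,|\,v_2)$; using the standard basis $e_1,e_2$, the map $(t,g)\mapsto(tge_1,ge_2)$ trivializes $U\cong(\R\minuszero)\times SL(2,\R)$, so every invariant measure on $U$ has the form $\int_{\R\minuszero}\kappa_t\,d\nu(t)$, where $\kappa_t$ is Haar measure on $O_t$ and $\nu$ is a Radon measure on $\R\minuszero$. Unwinding the trivialization identifies $\int_{O_t}h\,d\kappa_t=\int_{SL(2,\R)}h(tx,y)\,d\kappa(x,y)$, the first term of the formula. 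On $P$, every pair is $(w,sw)$ with $s\in\R\minuszero$, its orbit is $\{(u,su):u\ne 0\}\cong\R^2\minuszero$ with stabilizer the unipotent subgroup fixing $w$; thus the invariant measure on each orbit is Lebesgue $\ell_s$, the slope $s$ is a complete orbit invariant, and $\rho|_P=\int \ell_s\,d\eta(s)$ for a Radon measure $\eta$ supported on $\R\minuszero\subset\mathbb{P}^1(\R)$, with $\int_{P_s}h\,d\ell_s=\int_{\R^2}h(x,sx)\,dx$, the second term.

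Combining the two pieces yields, for $h\in C_c\big((\R^2\minuszero)^2\big)$,
\[
L(h)=\int_{\R\minuszero}\Big(\int_{SL(2,\R)}h(tx,y)\,d\kappa(x,y)\Big)\,d\nu(t)+\int_{\mathbb{P}^1(\R)}\Big(\int_{\R^2}h(x,sx)\,dx\Big)\,d\eta(s).
\]
To reach an arbitrary $h\in B_c(\R^4)$ I would extend by a monotone-class argument: both sides are continuous along uniformly bounded monotone sequences supported in a fixed $K_1\times K_2$ and are controlled by the same Cauchy--Schwarz bound, so agreement on $C_c$ propagates to $B_c$. The delicate points, where I expect to spend most care, are (i) justifying the disintegrations over the orbit spaces rigorously, namely that the conditional measures really are the unique invariant measures on the orbits, which rests on the orbits being locally closed and the action being free on $U$ and transitive with unipotent stabilizer on each orbit of $P$; and (ii) ensuring that the decomposition is \emph{complete}, so that the lower-dimensional boundary $P=\{\det=0\}$ is not silently discarded when one disintegrates over the map $\det$. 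Treating $\rho|_U$ and $\rho|_P$ separately is precisely what prevents this loss, and is the structural reason that two distinct families of measures, $\nu$ on $\R\minuszero$ and $\eta$ on $\mathbb{P}^1(\R)$, appear in the statement.
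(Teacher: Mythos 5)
Your proposal is correct and follows essentially the same route as the paper: establish integrability of $\widehat{h}$ from the $L^2$ hypothesis (your Cauchy--Schwarz bound is the paper's domination of $\widehat{h}$ by $(\widehat{f})^2$ in slightly different clothing), realize the invariant positive functional $h\mapsto\int_{\hh}\widehat{h}\,d\lambda$ as an $SL(2,\R)$-invariant measure on $\R^2\times\R^2$, and decompose that measure over the orbits $\{\det=t\}$ and $\{(x,sx)\}$ with their unique invariant measures. You supply some details the paper leaves implicit (the Riesz representation step and the monotone-class extension from $C_c$ to $B_c$), but the structure of the argument is identical.
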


\medskip

\noindent 

\subsection{Counting bounds}\label{sec:counting} Veech introduced the Siegel-Veech transform to understand counting problems. Given a translation surface $(X,\omega)$ (or simply  $\omega$),  let $$N(\omega, R) = \#\left(\La_{\om} \cap B(0, R)\right)$$ denote the number of saddle connections of length at most $R$. Masur~\cite{MasurCounting} showed that there are constants $$0 < c_1 = c_1(\omega) \le c_2 = c_2(\omega)$$ so that $$c_1 R^2 \le N(\omega, R) \le c_2 R^2.$$ Dozier~\cite{Dozier} recently improved these bounds, showing that there is a \emph{uniform} $c$ and $R(\omega)$ so that $$N(\omega, R) \le c R^2 \mbox{ for } R > R(\omega).$$  The Siegel-Veech formula computes the \emph{mean} of $N(\omega, R)$,  $$\int_{\hh} N(\omega, R) d\mu(\om) = c_{SV}(\mu) \pi R^2.$$ Eskin-Masur~\cite{EskinMasur} showed that for $\mu$-almost every $\omega \in \hh$, $$\lim_{R \rightarrow \infty} \frac{N(\omega, R)}{\pi R^2} = c_{SV}(\mu).$$

\subsubsection{Error terms}\label{sec:error} Our results can be viewed as showing that Siegel-Veech transforms have finite \emph{variance}. Variance bounds in turn yield \emph{concentration bounds}, bounding the probability of large discrepancy from the mean. Finally, combining concentration bounds with the Borel-Cantelli lemma yield almost everywhere error term bounds for $N(\omega, R)$. Suppose we write $$L(R) = \|N(\omega, R)\|_2^2,$$ and let $e(R)$ denote an \emph{error} function. Define $$V(R) = L(R) - \|N(\omega, R)\|_1^2 = L(R) - c_{SV}(\mu)^2 \pi^2 R^4.$$

\begin{Theorem}\label{theorem:error} Let $R_k \rightarrow \infty$ be a sequence, and $e$ be a function such that $$\sum_{k=1}^{\infty} \frac{V(R_k)}{e(R_k)^2} < \infty.$$Then for $\mu$-almost every $\om \in \hh$, there is a $k_0$ so that for all $k \geq k_0$ $$\left|N(\omega, R_k) - c_{SV}(\mu) \pi R_k^2\right| < e(R_k).$$

\end{Theorem}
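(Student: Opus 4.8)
The plan is to run a standard second-moment estimate (Chebyshev's inequality) together with the Borel--Cantelli lemma; the only substantive input is the finiteness of the variance, which is exactly what Theorem~\ref{theorem:main} provides. The first observation is that the counting function is itself a Siegel--Veech transform: if $\id_{B(0,R)} \in B_c(\R^2)$ denotes the indicator of the ball of radius $R$, then $N(\om,R) = \widehat{\id_{B(0,R)}}(\om)$. Consequently Theorem~\ref{theorem:main} yields $N(\cdot,R) \in L^2(\hh,\mu)$, so $L(R) = \|N(\cdot,R)\|_2^2 < \infty$ and hence $V(R_k) < \infty$ for every $k$. Normalizing $\mu$ so that $\mu(\hh)=1$ and regarding $(\hh,\mu)$ as a probability space, the Siegel--Veech formula identifies the mean $\|N(\cdot,R)\|_1 = c_{SV}(\mu)\pi R^2$, and then $V(R) = \|N(\cdot,R)\|_2^2 - \|N(\cdot,R)\|_1^2$ is precisely the variance of the random variable $N(\cdot,R)$.

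Next I would introduce, for each $k$, the exceptional set
$$A_k = \set{\om \in \hh : \av{N(\om,R_k) - c_{SV}(\mu)\pi R_k^2} \ge e(R_k)}.$$
Applying Chebyshev's inequality to $N(\cdot,R_k)$, whose mean is $c_{SV}(\mu)\pi R_k^2$ and whose variance is $V(R_k)$, I obtain
$$\mu(A_k) \le \frac{V(R_k)}{e(R_k)^2}.$$
Summing over $k$ and using the hypothesis of the theorem gives $\sum_{k \ge 1} \mu(A_k) \le \sum_{k \ge 1} V(R_k)/e(R_k)^2 < \infty$.

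Finally, the Borel--Cantelli lemma shows that $\mu(\limsup_k A_k) = 0$, i.e. for $\mu$-almost every $\om$ only finitely many of the events $A_k$ occur. Equivalently, for $\mu$-almost every $\om$ there is a $k_0 = k_0(\om)$ such that $\av{N(\om,R_k) - c_{SV}(\mu)\pi R_k^2} < e(R_k)$ for all $k \ge k_0$, which is the desired conclusion. I do not anticipate a genuine obstacle in this argument: all of the analytic content lives in Theorem~\ref{theorem:main}, and the only point requiring care is the verification that $V(R)$ is literally the variance of $N(\cdot,R)$ (hence finite), after which the two classical probabilistic lemmas close the argument immediately.
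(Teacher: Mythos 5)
Your proposal is correct and follows essentially the same route as the paper: identify $V(R)$ as the variance of $N(\cdot,R)$ (with mean $c_{SV}(\mu)\pi R^2$ given by the Siegel--Veech formula and finiteness of the second moment from Theorem~\ref{theorem:main}), apply Chebyshev's inequality to bound $\mu(A_k)$ by $V(R_k)/e(R_k)^2$, and conclude with the easy direction of the Borel--Cantelli lemma. No substantive difference from the paper's argument.
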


\medskip

\noindent The main result Theorem~\ref{theorem:variance} of Appendix~\ref{sec:appendix} gives a power savings bound for $|L(R)- c_{\mu} \pi R^2|$. Using this, we obtain in \S\ref{sec:powersavings} almost everywhere power savings along lacunary sequences. Recently, Nevo-R\"uhr-Weiss~\cite{NRW}, using exponential mixing of Teichm\"uller flow, give error bounds with a power savings (along the full sequence).

\subsection{Organization of the paper}\label{sec:organization} In \S\ref{sec:delaunay}, we describe technical result about Delaunay triangulations that is crucial in our proofs. In \S\ref{sec:measurebounds}, we prove Theorem~\ref{theorem:main} via intermediate results Theorem~\ref{theorem:main:1} and Theorem~\ref{theorem:main:2}, which prove the result for the indicator function of the unit disk and of a ball of radius $R$ respectively.  In \S\ref{sec:spectral}, we prove Theorem~\ref{theorem:spectral} and discuss explicit computations in special cases. 

\subsubsection{Acknowledgments} We would like to thank Alex Eskin, Duc-Manh Nguyen, Kasra Rafi, Rene Ruhr, John Smillie,  and Barak Weiss,  for useful discussions. Both Ben Dozier and the anonymous referee made suggestions which greatly improved the paper. We dedicate this paper to the memory of William Veech.  This project was initiated during the Spring 2015 programs at the Mathematical Sciences Research Institute (MSRI), ``Geometric and Arithmetic Aspects of Homogeneous Dynamics" and ``Dynamics on moduli spaces of geometric structures". This work was continued at the No Boundaries conference at the University of Chicago in Fall 2017; the Mathematisches Forschungsinstitut Oberwolfach (MFO) workshop on ``Flat Surfaces and Algebraic Curves" in Fall 2018; and completed at the Fields Institute program on ``Teichmüller Theory and its Connections to Geometry, Topology and Dynamics" in Fall 2018. We thank the organizers of these meetings, MSRI, the University of Chicago, MFO, and the Fields Institute for their hospitality and support.


\section{Delaunay triangulations and Chew's theorem}\label{sec:delaunay} In this section, we review the concepts of a Delaunay triangulation (introduced in~\cite{MasurSmillie}) of a translation surface and show how to adapt a theorem of Chew~\cite{Chew} from the Euclidean plane to the setting of translation surfaces.

\subsection{Delaunay triangluations}\label{subsec:delaunay} Let $S = (X, \omega)$ be a translation surface. Pulling back the $L^1$ metric from $\C$ using the charts determined by $\omega$, we can consider the \emph{Voronoi decomposition} of the translation surface with respect to this metric: each singular point determines a cell given by points which are closer to it than to any other singular point, and have a unique shortest geodesic connecting the two. The dual decomposition is the Delaunay decomposition of the surface, and any triangulation given by a further dissection of this is a $L^1$ \emph{Delaunay triangulation} of $(X, \omega)$. We recall the following elementary but important lemma:

\begin{lemma}\label{lemma:shortest} Let $\gamma$ be the shortest saddle connection on $S$. Then $\gamma$ is an edge in any Delaunay triangulation of $S$. 
\end{lemma}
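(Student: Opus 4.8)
The plan is to establish the \emph{empty-disk} (here empty $L^1$-ball) characterization of Delaunay edges and then exploit the minimality of $\gamma$ to verify it. First I would reduce the statement to the Delaunay \emph{decomposition}: since a Delaunay triangulation is obtained from the Delaunay decomposition only by dissecting its (possibly non-triangular) faces, the $1$-cells of the decomposition are edges of \emph{every} Delaunay triangulation. Thus it suffices to show that the endpoints $p,q$ of $\gamma$ have adjacent Voronoi cells and that the dual $1$-cell is $\gamma$ itself. In terms of the $L^1$ metric this amounts to producing a point $c$ that is equidistant from $p$ and $q$, has $p,q$ as its (only) nearest singular points, and lies in the relative interior of the common boundary of the two Voronoi cells.

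The natural candidate is the midpoint $c$ of $\gamma$. Since $\gamma$ is a geodesic and $L^1$-length is additive along it, each half of $\gamma$ gives $d(c,p)\le |\gamma|/2$ and $d(c,q)\le |\gamma|/2$. The crux is an emptiness estimate from minimality: if some singular point $s$ had $d(c,s)<|\gamma|/2$, then concatenating a shortest path from $p$ to $c$ with a shortest path from $c$ to $s$ produces a path from $p$ to $s$ of length strictly less than $|\gamma|$; passing to the arc up to the first singularity it meets yields a saddle connection strictly shorter than $\gamma$, contradicting minimality. The same comparison forces $d(c,p)=d(c,q)=|\gamma|/2$. Consequently the closed $L^1$-ball of radius $|\gamma|/2$ about $c$ has empty interior (no singular points) and carries $p,q$ on its boundary, which is exactly the certificate that $c$ lies on the Voronoi edge separating the cells of $p$ and $q$, with dual Delaunay edge $\gamma$.

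The hard part will be the $L^1$-specific degeneracies, which is where the argument differs from the familiar Euclidean one. In $L^1$ the ``circle'' is a diamond, and two points on the sphere of radius $|\gamma|/2$ about $c$ can be at distance exactly $|\gamma|$ (for instance the two endpoints of $\gamma$ together with either transverse vertex of the diamond); a singular point sitting at such a vertex would make the empty ball pass through four cocircular sites and turn $\gamma$ into a \emph{diagonal} of a larger Delaunay face rather than an edge. To exclude this I would use that $\gamma$ is the \emph{strictly} shortest saddle connection (equivalently, invoke the uniqueness-of-shortest-geodesic clause in the definition of the Voronoi cells): any singular point $s\neq p,q$ on the boundary sphere would satisfy $d(p,s)\le|\gamma|$ through $c$, hence would furnish a second saddle connection of length $\le|\gamma|$, contradicting strict minimality. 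A parallel issue is the loop case $p=q$ and, more generally, the need to read off nearest singular points on the surface rather than in the developing plane; here I would observe that the metric ball of radius $|\gamma|/2$ about $c$ must be embedded --- an immersed self-overlap of this radius would again produce a saddle connection shorter than the systole $|\gamma|$ --- which legitimizes the triangle-inequality bookkeeping above and makes the shorter loop produced in the $p=q$ case homotopically essential. Once embeddedness and the boundary-sphere exclusion are in place, the minimality comparison closes the argument.
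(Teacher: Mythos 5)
Your proof is correct and follows essentially the same route as the paper's: both identify the perpendicular bisector of $\gamma$ near its midpoint as a genuine Voronoi wall between the cells of its endpoints, so that the dual edge $\gamma$ appears in the Delaunay decomposition and hence in every triangulation refining it. Your version simply makes explicit the empty-ball certificate at the midpoint, the reduction from triangulations to the decomposition, and the $L^1$-cocircularity and closed-loop degeneracies that the paper's two-line argument leaves implicit.
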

\begin{proof} Consider the endpoints (possibly the same point) of $\gamma$. If the points are distinct, the edge between the two Voronoi regions containing the points is a segment of the perpendicular bisector of $\gamma$. Thus $\gamma$ is an edge of the Delaunay decomposition. If $\gamma$ is closed, the perpendicular bisector of $\gamma$ consists of points where there are two shortest paths to the singularity, and again is an edge of the Voronoi decomposition. 
\end{proof}

\subsection{Chew's algorithm}\label{subsec:chew} We will use the Delaunay triangulation to build paths that approximate saddle connections. The ideas come from the remarkable work in computational geometry by Chew \cite{Chew}. He proved the following:

\begin{Theorem}\label{theorem:chew} Let $\Sigma$ be a set of points in the plane $\R^2$ and let $T$ be an $L^1$ Delaunay triangulation of $\Sigma$. For any points $A$ and $B$ of $\Sigma$, there exists an $A$-to-$B$ path along edges of $T$ that has length at most $\sqrt{10}|AB|$, where $|AB|$ is the standard Euclidean distance between $A$ and $B$.
\end{Theorem}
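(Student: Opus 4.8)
The plan is to argue entirely in the plane, using the \emph{empty-disk characterization} of the $L^1$ Delaunay triangulation: an edge $CD$ belongs to $T$ exactly when some $L^1$-disk --- a diamond, i.e.\ a homothet of the unit ball $\{|x| + |y| \le 1\}$ --- carries $C$ and $D$ on its boundary and contains no point of $\Sigma$ in its interior. This is the $L^1$ analogue of the Euclidean empty-circle property and is dual to the $L^1$ Voronoi decomposition used to build $T$. Note that, although $T$ is defined through the $L^1$ metric, the length we must bound and the quantity $|AB|$ are both \emph{Euclidean}; the two metrics interact only through the constant.

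First I would cut down the range of directions of the segment $\overline{AB}$ by symmetry. The diamond $\{|x|+|y|\le 1\}$ is invariant under the dihedral group of order $8$ generated by the reflections in the coordinate axes and in the lines $y = \pm x$; every such map is simultaneously an $L^1$-isometry and a Euclidean isometry, and it carries $T$ to an $L^1$ Delaunay triangulation of the image of $\Sigma$. Hence, after translating $A$ to the origin and applying a suitable symmetry, I may assume $B = (b_1, b_2)$ with $b_1 \ge b_2 \ge 0$, so that $B$ lies in the closed sector between the positive $x$-axis and the diagonal $y = x$. To this configuration one attaches the canonical empty-test diamond $Q(A,B)$ with left vertex at $A$ and $B$ on its upper-right edge (explicitly, the $L^1$-disk of radius $(b_1+b_2)/2$ centered at $((b_1+b_2)/2,\, 0)$); if $Q(A,B)$ happens to be empty, the empty-disk characterization makes $AB$ itself an edge of $T$ and there is nothing to prove.

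In general $\overline{AB}$ crosses a sequence of Delaunay triangles $t_0, t_1, \dots, t_m$ with $A \in t_0$, $B \in t_m$, consecutive triangles sharing a crossed edge. I would build the path by walking through this sequence, at each crossed edge routing through whichever endpoint keeps the walk advancing toward $B$. The empty-diamond property is what keeps this under control: emptiness of the diamonds circumscribing the $t_i$ confines the vertices of the crossed edges to a bounded neighborhood of $\overline{AB}$, so the lateral excursions of the path cannot accumulate.

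The main obstacle is the length accounting, and it is here that the precise value $\sqrt{10}$ is forced. One cannot simply recurse with the target factor: splitting at an intermediate vertex $C$ and invoking the bound on $A\to C$ and $C \to B$ would require $|AC| + |CB| \le |AB|$, which is false. The correct device is a \emph{progress potential} measuring advancement along the $\overline{AB}$ direction, against which the lateral detours are charged using the diamond emptiness; one then optimizes the worst-case ratio of accumulated Euclidean length to net progress over all geometrically admissible positions of the crossed-edge vertices inside the empty diamonds. This optimization --- with diamond-shaped empty regions rather than round ones --- is what produces the sharp constant $\sqrt{10}$, and extremal configurations show it cannot be improved. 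I expect this worst-case bookkeeping, rather than any conceptual step, to be the bulk of the work.
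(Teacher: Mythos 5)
You have correctly identified the skeleton of Chew's argument --- the empty-diamond characterization of $L^1$ Delaunay edges, the dihedral-symmetry reduction to slope in $[0,1]$, and the sequence of triangles crossed by $\overline{AB}$ --- and this matches the structure the paper relies on (the paper does not reprove Theorem~\ref{theorem:chew}; it cites Chew and then walks through his algorithm when adapting it to translation surfaces in the proof of Lemma~\ref{lemma:chew}). But your write-up stops exactly where the proof has to begin. The routing rule ``go through whichever endpoint keeps the walk advancing toward $B$'' is not a well-defined algorithm, and it is not Chew's rule. His algorithm, given the current vertex $z_i$, jumps to the \emph{last} crossed triangle $\Delta_{j_{i+1}}$ having $z_i$ as a vertex, and then moves along the boundary of the circumscribing diamond of that triangle to a new vertex $z_{i+1}$, chosen by a case analysis on which side of the diamond $z_i$ occupies: clockwise to the next vertex if $z_i$ is on an upper side, and to the lower-right vertex if $z_i$ is on the lower-left side (the only lower possibility under the slope hypothesis). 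The path therefore consists of explicit diamond-boundary pieces (the v-, carat-, bracket- and $>$-shaped segments described in the paper), and it is the geometry of these specific pieces that makes the length controllable at all.

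More seriously, the ``worst-case bookkeeping'' you defer is the entire content of the theorem, not a routine verification. The estimate one actually needs is that the path length is bounded by a fixed linear form in the coordinate extents $\Delta x,\Delta y$ of $\overline{AB}$ --- essentially $3\,\Delta x+\Delta y$ --- which requires showing that the backtracking on the final segment of each diamond-boundary piece is limited because the extension of that segment meets $\overline{AB}$ to the right of the diamond's vertical line of symmetry. Cauchy--Schwarz applied to $3\,\Delta x+\Delta y$ over $0\le\Delta y\le\Delta x$ is precisely what produces $\sqrt{3^2+1^2}=\sqrt{10}$. Without that estimate you have a plan, not a proof. (A side remark: your claim that extremal configurations show $\sqrt{10}$ cannot be improved is false --- the exact stretch factor of $L^1$ Delaunay triangulations is known to be $\sqrt{4+2\sqrt{2}}<\sqrt{10}$ --- though this does not affect the upper bound you are asked to prove.)
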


We will apply Chew's argument (which goes through word-for-word) to the collection of singular points on a translation surface, using the $L^1$ Delaunay triangulation defined above. Precisely, Chew's argument yields the following:

\begin{lemma}\label{lemma:chew} Let $S = (X, \omega)$ be a translation surface, and let $T$ be the $L^1$ Delaunay triangulation with respect to the set of singular points $\Sigma$. Then every saddle connection $\beta$ is homotopically equivalent to a path $P(\beta)$ in the $L^1$-Delaunay triangulation whose total (euclidean) length sastifies  $$ |P(\beta)| \le  \sqrt{10}|\beta|.$$
\end{lemma}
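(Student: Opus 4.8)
The plan is to lift the entire configuration to the plane via the developing map, apply Chew's theorem (Theorem~\ref{theorem:chew}) there, and then project back to $S$. Since $\omega$ endows $S$ with a flat structure of trivial holonomy, cutting along $\beta$ and developing through the charts determined by $\omega$ gives a local isometry from a neighborhood of $\beta$ into $\C \cong \R^2$. Under this map $\beta$ develops to a straight Euclidean segment from a point $A$ to a point $B$, the images of the two endpoints of $\beta$, with $|AB| = |\beta|$ and with no singular point in the interior of the segment.

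First I would verify that the $L^1$ Delaunay triangulation is compatible with development. The Voronoi decomposition, and hence the dual Delaunay triangulation, depends only on the pulled-back $L^1$ metric, which the developing map carries isometrically to the standard $L^1$ metric on $\R^2$. Because both the empty-disk characterization of Delaunay edges and the $L^1$ metric are \emph{local}, the restriction of $T$ to a neighborhood of $\beta$ develops to a portion of an $L^1$ Delaunay triangulation of the developed image of $\Sigma$ in the plane. This is precisely the setting in which Chew's planar argument operates.

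Next, Chew's theorem applied to $A$, $B$ and this planar Delaunay triangulation produces an $A$-to-$B$ path $\widetilde{P}$ along Delaunay edges whose Euclidean length satisfies $|\widetilde{P}| \le \sqrt{10}\,|AB| = \sqrt{10}\,|\beta|$. The feature of Chew's construction that makes the adaptation possible is that it is local: the path is assembled from the Delaunay triangles that meet the segment $AB$, together with their immediate neighbors, so $\widetilde{P}$ remains within a bounded neighborhood of the segment. I would then define $P(\beta)$ to be the projection of $\widetilde{P}$ back down to $S$, and the length is preserved since the projection is an isometry.

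The main obstacle is showing that this projection is well-defined and yields a path homotopic to $\beta$, which amounts to controlling the injectivity of the developing map on the triangles Chew's algorithm touches. The key point is that the segment $AB$ contains no singular point in its interior, so the chain of Delaunay triangles crossing it forms an immersed disk about a segment free of interior cone points; along such a segment the development is faithful, and the triangles (hence $\widetilde{P}$) project without overlap onto genuine edges of $T$. The resulting path joins the two endpoints of $\beta$ and, lying in the developed neighborhood of $\beta$, is homotopic to $\beta$ rel endpoints, giving $|P(\beta)| \le \sqrt{10}\,|\beta|$. Making the ``no interior singularity implies faithful development'' step precise, and confirming that Chew's path never strays outside the embedded region, is where the real work lies.
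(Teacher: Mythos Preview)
Your overall strategy---lift along $\beta$, run Chew, project back---is exactly what the paper does, but there is a genuine gap in how you invoke Chew. You write that ``the restriction of $T$ to a neighborhood of $\beta$ develops to a portion of an $L^1$ Delaunay triangulation of the developed image of $\Sigma$ in the plane,'' and then apply Theorem~\ref{theorem:chew} as a black box. This step does not hold: the chain of developed triangles is a triangulated polygon (an $(N+2)$-gon, in the paper's language), but it is \emph{not} the $L^1$ Delaunay triangulation of its vertex set in $\R^2$. The empty-diamond property of a Delaunay triangle on $S$ says the circumscribing diamond contains no singularities \emph{of $S$}; after developing, that diamond may well contain other developed vertices of the polygon, so the planar Delaunay triangulation of those finitely many points can differ from what you have. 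Nor can you pass to the full lift of $\Sigma$ to the universal cover, since that is an infinite set and Chew's theorem is stated for finite point sets.

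The fix, which is what the paper does, is to run Chew's \emph{algorithm} rather than cite Chew's \emph{theorem}. The algorithm walks through the triangles crossed by $\beta$ one at a time, and at each step uses only the circumscribing diamond of a single Delaunay triangle and the positions of that triangle's three vertices on its boundary. This data is intrinsic to the surface triangulation and survives the lift, so the length estimate goes through word-for-word. A smaller point: your claim that the triangles ``project without overlap'' is false---distinct triangles in the chain can project to the same triangle on $S$ (indeed Lemma~\ref{lemma:5crossing} exploits exactly this). What you actually need is only that each lifted edge projects to a genuine edge of $T$, and that the concatenated path is homotopic to $\beta$ rel endpoints; overlapping is harmless.
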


\begin{proof} We describe Chew's algorithm to produce a Delaunay path. We assume, without loss of generality, that the slope of $\beta$ is bounded in absolute value by $1$.  Consider the collection of triangles  in the universal cover of $S$ given by lifting the triangulation. Suppose $\beta$ crosses $N$ triangles in the triangulation. Then the sequence of these triangles crossed by any lift of $\beta$ in the universal cover forms an $(N+2)$-gon.  

Write the triangles from left to right as $\Delta_j, j=1,\ldots, N$, and let $z_0$ be left endpoint of $\beta$. The algorithm recursively selects a subsequence $\Delta_{j_i}$, $i=0,1,2, \ldots$ starting with $j_0=1$ and a sequence of vertices $z_i$ of $\Delta_{j_i}$.  
Given $\Delta_{j_i}$ having $z_i$ as a vertex, let $$j_{i+1} = \max\{j: z_i \mbox{ is a vertex of } \Delta_j\}.$$ $z_{i+1}$ is the vertex of $\Delta_{j_{i+1}}$ chosen as follows: Let $D_i$ denote the \emph{circumscribing diamond} of $\Delta_{j_i}$, that is the domain of an immersed $L^1$ disc (that is, a diamond)
having the vertices of $\Delta_{j_i}$ on its boundary.  We assume our surface has the property that each Delaunay triangle has at most one 
singularity on each side of $D_i$, since the collection of such surfaces is of full measure. Assume $z_i$ is on or above $\beta.$  (The other case is similar.) We divide into two cases:

\begin{enumerate}

\item $z_i$ lies on an upper side of $D_i$.  We choose $z_{i+1}$ to be the first vertex encountered moving clockwise around boundary of $D_i$. Note that $z_{i+1}$ remains above $\beta$.

\medskip

\item $z_i$ lies on the lower side of $D_i$. Note that this implies $z_i$ is on the lower \emph{left} of $D_i$ by the slope assumption.\footnote{In particular, the fourth and last case of Chew's algorithm in \cite{Chew} never occurs.}  We choose $z_{i+1}$ to be the vertex in the lower right. Now $z_{i+1}$ is below $\beta$

\end{enumerate}

Note that in the second case, the path along boundary of $D_i$ from $z_i$ to $z_{i+1}$ 
is a 2-segment v-shape.  The first case has 3 subcases: 

\begin{enumerate}
\item $z_i$ on upper left, $z_{i+1}$ on upper right, the path connecting them on the boundary of $D_i$ is carat (\verb|^|) shaped, consisting of two line segments.

\item $z_i$ on upper left, $z_{i+1}$ on lower right,the path connecting them on the boundary of $D_i$ is \verb|]| shaped, consisting of three line segments (not necessarily at right angles).

\item $z_i$ on upper right, $z_{i+1}$ on lower right, the path connecting them on the boundary of $D_i$ is \verb|>| shaped, consisting of two line segments.
\end{enumerate}

In the last 2 subscases (of the first case) the path along boundary $D_i$ from $z_i$ to $z_{i+1}$ backtracks in the sense that $dx/dt$ is negative on the last segment.  The estimate on length follows because we can only have a limited amount of back-tracking in the sense that if the last segment is extended to hit $\beta$, the point where it hits is to the right of the vertical line of symmetry of $D_i.$
\end{proof}

\begin{remark}\label{remark:chew}
Notice that the saddle connection joining $z_{i-1}$ to $z_i$ makes an angle at $z_i$ strictly less than $2\pi$ with  the saddle connection joining $z_i$ and $z_{i+1}$. This follows from the  fact that each makes an angle strictly smaller than $\pi$ with the last saddle connection crossed by $\beta$.
\end{remark}
\section{Measure bounds and decompositions}\label{sec:measurebounds} In this section we prove Theorem~\ref{theorem:main},  first proving it for the indicator function of a small  disk (Theorem~\ref{theorem:main:1}) and then for the disk of radius $R$ (Theorem~\ref{theorem:main:2}). 

\subsection{Tail bounds}
Given $f \in B_c(\R^2)$,  to prove $\widehat{f} \in L^2(\hh,\mu)$, we need to show \begin{equation}
\label{eq:upper:bound}
\sum_{k =1}^{\infty} \mu\{(X,\omega): \widehat{f}(X,\omega) > \sqrt{k}\}<\infty.
\end{equation}

\subsection{A fixed disc} The first iteration of our main result is:
\begin{Theorem}\label{theorem:main:1}
Fix a small $\epsilon_0$ and let $f:\R^2\to \R$ be the indicator function of the disc of radius $\epsilon_0$.
Then $$\widehat f\in L^2(\hh, \mu).$$
\end{Theorem}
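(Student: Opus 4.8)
The plan is to establish the tail bound \eqref{eq:upper:bound}. Writing $N(\omega) = \widehat f(\omega) = \#\{v \in \Lambda_\omega : |v| \le \epsilon_0\}$ for the number of saddle connections of length at most $\epsilon_0$, it suffices to prove $\mu\{N > T\} = O(T^{-3})$, since then $\sum_k \mu\{\widehat f > \sqrt k\} \lesssim \sum_k k^{-3/2} < \infty$. I would first dispose of the thick part. On the region where the shortest saddle connection has length bounded below by a fixed $\epsilon_1 > 0$, Chew's Lemma~\ref{lemma:chew} shows every saddle connection of length $\le \epsilon_0$ is homotopic to a Delaunay path of length $\le \sqrt{10}\,\epsilon_0$, hence uses at most $\sqrt{10}\,\epsilon_0/\epsilon_1$ Delaunay edges; as the number of Delaunay edges is bounded by a topological constant of the stratum, $N$ is uniformly bounded there, and this region contributes nothing to the tail once $T$ is large.

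Thus $N$ can be large only on the thin part, where by Lemma~\ref{lemma:shortest} the shortest saddle connection is itself a short Delaunay edge. Around such an edge I would locate a thin cylinder of circumference $\ell$, and the key geometric input — coming from Chew's algorithm together with the angle control of Remark~\ref{remark:chew} — is that the abundant short saddle connections are precisely the ones winding transversally across this cylinder. Their number is comparable to $\epsilon_0/\ell$; moreover each such saddle connection has length at least the cylinder height $h$, so that $N$ being large forces $h \le \epsilon_0$. (The finitely many non-winding short saddle connections are again controlled by the Delaunay complexity.)

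The crux is the measure estimate, and the decisive feature is the Dehn twist degree of freedom. Parametrize the cylinder by the holonomy $w_c$ of its core, with $|w_c| = \ell$, and the holonomy $z_u$ of a transverse saddle connection $u$; twisting sends $u \mapsto u + c$, i.e.\ $z_u \mapsto z_u + w_c$, so in a fundamental domain the component of $z_u$ along $w_c$ ranges over an interval of length only $\ell$. Writing $z_u = \alpha w_c + \beta\, i w_c$ one finds $|dz_u|^2 = \ell^2\, d\alpha\, d\beta = \ell\, d\alpha\, dh$ with $h = \beta \ell$, so integrating the twist $\alpha \in [0,1)$ gives a transverse measure $\asymp \ell\, dh$ — an extra factor $\ell$ beyond the naive $\ell^2$ from $|w_c| \le \ell$. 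Hence the locus $\{\text{circumference} \le \ell,\ \text{height} \le \epsilon_0\}$, which is exactly where $N$ is large, has measure $\asymp \ell^3$ rather than $\ell^2$. Taking $\ell \asymp \epsilon_0/T$ yields $\mu\{N > T\} \lesssim T^{-3}$ and therefore $\widehat f \in L^2$. This is genuinely borderline: the identical computation gives $\int \widehat f^{\,3}\,d\mu = \infty$, matching the corrected claim that~\cite{AthreyaChaika} establishes only $\widehat f \notin L^3$, the overlooked twist factor being exactly what rescues $L^2$.

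The hard part will be making the counting and the measure estimate rigorous uniformly over the whole thin part rather than for a single idealized cylinder. Specifically I would need Chew's theorem to guarantee that \emph{every} short saddle connection — not just the manifestly winding ones — is accounted for by the short Delaunay edges, so that $N \lesssim \epsilon_0/\ell$ is an honest upper bound and not merely the lower bound furnished by one cylinder. I would also need to treat thin parts carrying several independent short edges, i.e.\ several short cylinders with circumferences $\ell_1,\dots,\ell_r$: here each contributes its own twist-corrected factor, so the measure is $\asymp \prod_i \ell_i^{3}$, which is more strongly convergent, confirming that the single-cylinder case is the worst one. Controlling the interface between these configurations, and justifying the earlier full-measure genericity assumption on the circumscribing diamonds, is where the technical effort concentrates.
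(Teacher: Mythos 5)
Your reduction to the tail bound, your treatment of the thick part, and your cylinder computation are all sound: the locus of surfaces carrying a cylinder of circumference at most $\ell$ and height at most $\epsilon_0$ does have measure $O(\epsilon_0\ell^3)$ (this is exactly the paper's $\Omega_2$ estimate in \S\ref{sec:omega2}, quoted from Masur--Smillie), the extra factor of $\ell$ does come from the twist parameter, and the winding saddle connections do contribute $\asymp\epsilon_0/\ell$ to the count, which is the correct source of the $L^3$ failure. The genuine gap is the step you flag as ``the hard part'' and then treat as a technicality: the upper bound $N\lesssim\epsilon_0/\ell$ over the \emph{whole} thin part. The shortest saddle connection $\gamma$ need not lie on the boundary of any cylinder; when it does not, there is no twist parameter and no cubic measure gain --- the locus $\{|\gamma|\le\ell\}$ has measure only $O(\ell^2)$ --- while the count can a priori be as large as the Eskin--Masur bound $\widehat f\le C|\gamma|^{-1-\delta}$ of (\ref{eq:eskinmasur}) permits, which is far larger than $\epsilon_0/\ell$. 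Chew's theorem and the angle control of Remark~\ref{remark:chew} do not rule this out; what they yield (via Lemma~\ref{lemma:5crossing}) is only that a Delaunay path following edges parallel to $\gamma$ too many times must cross a cylinder, which says nothing about paths built from the \emph{other} moderately short edges.

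The paper's proof is organized entirely around this difficulty. It first uses the Eskin--Masur bound to convert ``$\widehat f$ large'' into ``$|\gamma|$ small'', then splits the thin part into $\Omega_0$ (a second independent short saddle connection, with measure $O(|\gamma|^{2+2p})$ from two independent holonomy vectors --- note the exponent $2$ per vector, not your $3$, since no height constraint is available), $\Omega_1$ ($\gamma$ short but not a cylinder curve, where Lemma~\ref{lemma:5crossing} plus a homology-coefficient count gives only $\widehat f=O(|\gamma|^{-Np})$ against a measure bound $O(|\gamma|^2)$), and $\Omega_2$ (your cylinder case). The $\Omega_1$ case is the bottleneck: it is why the paper obtains $\widehat f\in L^q$ only for $q<1+\sqrt{1+4/N}<3$ (Theorem~\ref{theorem:optimalexponent}), rather than the tail $\mu\{N>T\}=O(T^{-3})$ you assert, which the authors can establish only for rank one loci (\S\ref{sec:rankone}) and otherwise leave open. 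To close your argument you would need either a proof that non-cylinder configurations cannot produce large counts (not known, and not what Chew's theorem gives) or, as the paper does, a separate count-versus-measure trade-off for those configurations at the cost of a weaker, but still square-summable, exponent.
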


 \begin{proof}
We will divide the stratum $\hh$  into a finite number of subsets $\hh_i$  and prove $$\int_{\hh_i}(\widehat f)^2d\mu <\infty$$ for each $\hh_i$.  

\subsection{Thick part} Let $\hh_1$ be the subset where every saddle connection has length at least $\epsilon_0$. By definition for $(X, \omega) \in \hh_1$, $\widehat f(X,\omega)=0$.

\subsection{No short loops}\label{sec:noshortloops}
  
Let $\hh_2$ be the set of $(X,\omega)$  for which there are saddle connections of length smaller than $\epsilon_0$ but no homotopically nontrivial closed curves of length less than $\epsilon_0$.

Take the $L^1$ Delaunay triangulation of $(X,\omega)$.  There are  no loops with length shorter than $\epsilon_0$.   By Lemma~\ref{lemma:chew} 
any saddle connection is homotopic to a path in the edges of the Delaunay triangulation of length at most a  fixed multiple of the length of the saddle connection. Since there are no loops shorter than $\epsilon_0$, any such path in the edges traverses  successively at most a fixed number of edges shorter than $\epsilon_0$ before traversing one of length at least $\epsilon_0$. Thus  a saddle  connection of  length at most $1$     can be written as a union of at most $O(1/\epsilon_0)$ edges of the triangulation and therefore expressed in terms of a fixed basis for $H_1(X,\omega,\Sigma)$ as a linear combination with coefficients that are $O(1/\epsilon_0)$. Thus there are $O(1/\epsilon_0^N)$ saddle connections, where $N$ is the dimension of $H_1(X,\omega,\Sigma)$.
Since $\epsilon_0$ is fixed, $\widehat{f}$ is bounded on $\hh_2$, that is $\widehat f \in L^{\infty}(\hh_2, \mu) \subset L^2(\hh_2, \mu)$, so $$\int_{\hh_2} (\widehat f)^2 d\mu<\infty.$$

\subsection{Short loops}\label{sec:shortloops}

Now we treat the case that there are short loops of length smaller than some fixed 
$\epsilon_0$.  Let $\hh_3$ be the set of $(X,\omega)$ with a short curve $\gamma$ of length at most $\epsilon_0$. 
Let $N$ be the dimension of relative homology, and choose $$0 < \delta < p < \frac 1 N.$$

\noindent Let $|\gamma|$ be the length of shortest saddle connection (possibly loop)  on $(X,\omega)$, and recall  that $\widehat f(X,\omega)$  counts the number of saddle connections whose holonomy vector  lies in a disc of radius $\epsilon_0$.    By Theorem 5.1 of \cite{EskinMasur}, 
 for some fixed $\epsilon_0$, there is $C$ (depending on $\delta$ but not $|\gamma|$)  such that  \begin{equation}\label{eq:eskinmasur}\widehat f(X,\omega)\leq \frac{C}{|\gamma|^{1+\delta}}.\end{equation}
If $\widehat f(X,\omega)\geq  \sqrt k$, then the above bound says that for $c = C^{\frac{1}{1+\delta}},$ \begin{equation}\label{eq:eskinmasur2}|\gamma|\leq ck^{-\frac{1}{2(1+\delta)}}.\end{equation}

\noindent We will make crucial use of the following:

\begin{lemma}\label{lemma:5crossing}
Suppose $\gamma$ is the shortest saddle connection on $(X, \omega)$. Let  $\beta$ be a saddle connection (with an orientation)such that the path $P(\beta)$ follows edges parallel to $\gamma$ more than $2M+1$ times, where $M$ is the total number of triangles in the Delaunay triangulation.  Then there is a cylinder $C$ with $\gamma$ on its boundary and $\beta$ crosses $C$.

\end{lemma}

\begin{proof}
The saddle connection $\gamma$ appears as an edge in the Delaunay triangulation since it is the shortest saddle connection. Chew's algorithm produces a path $P(\beta)$ that follows the 1-skeleton of the complex of triangles crossed by $\beta$. Lift this path to the universal cover $\widetilde{(X, \omega)}$ of $(X, \omega)$. Let $\Delta_1, \Delta_2, \ldots, \Delta_j$ be the sequence of triangles in $\widetilde{(X, \omega)}$ with an edge parallel to $\gamma$ written in the order they appear in $P(\beta)$. Consider the connected components of the lifts of this  set of edges in $\Delta_i$ parallel to $\gamma$. Since $\beta$ is a geodesic, its lift to $\widetilde{(X, \omega)}$ can cross at most one edge in a component; otherwise in $\widetilde{(X,\omega)}$ the lift of $\beta$ and  the edges in a component would bound a disc, contradicting that each is a geodesic.  Since by assumption, $P(\beta)$ follows at least $2M+2$ edges parallel to $\gamma$,  without loss of generality we can assume that the lift of $\beta$ crosses at least $M+1$ of the triangles $\Delta_1,\dots,\Delta_j$ which have edges parallel to $\gamma$, does not cross any of the edges of these $\Delta_i$ which are parallel to $\gamma$, and these edges are to the right of the lift of $\beta$. 

Since there are only $M$ distinct triangles of $(X,\omega)$, and the lift of $\beta$ crosses $M+1$ triangles, 
there are $\Delta_{i_1}, \Delta_{i_2}$ which are lifts of the same triangle on $(X, \omega)$ such that the lift of  $\beta$ does not cross the edges  $e_{i_1}$, $e_{i_2}$ of $\Delta_{i_1}, \Delta_{i_2}$ which are parallel to $\gamma$.  The angle on the left side between consecutive parallel segments of $P(\beta)$ at a singularity is an integer multiple of $\pi$; by Remark~\ref{remark:chew} it is exactly $\pi$. The corresponding edges of $\Delta_{i_1}$ and $\Delta_{i_2}$ adjacent to $e_{i_1}$ and $e_{i_2}$ are  parallel of the same length.  Subsegments of these together with the saddle connections parallel to $\gamma$ and a segment parallel to $\gamma$ joining the regular endpoints of the subsegments form a  parallelogram on $\widetilde{(X, \omega)}$ which projects to a cylinder on $(X, \omega)$.  Further $\beta$ crosses the corresponding maximal cylinder. 
\end{proof}

\subsection{Decomposing $\hh_3$}\label{sec:decomposition} We break the set of $(X,\omega)\in \hh_3$ such that $\widehat f\geq \sqrt k$ into three sets $\Omega_0(k)\cup\Omega_1(k)\cup\Omega_2(k)$. It suffices to prove $$\sum_k \mu(\Omega_i(k))<\infty$$ for each $i$. Let $\epsilon(X,\omega)$ be the length of the second shortest nonhomlogous saddle connection  on $(X,\omega)$.

\subsubsection{The second shortest nonhomologous saddle connection is shorter than a power of the shortest.}\label{sec:omega0} Let $$\Omega_0(k)=\{(X,\omega)\in \hh_3:\widehat f(X,\omega) \geq \sqrt k\ \text{and}\ \epsilon(X,\omega) \leq 
|\gamma|^p\}.$$ 

\noindent Since we have a saddle connection of length $|\gamma|$ and one of length at most $|\gamma|^p$ we have, by (\ref{eq:eskinmasur2}) 
$$\mu(\Omega_0(k))=O(|\gamma|^{2+2p})=O(k^{-\frac{1+p}{1+\delta}})$$ which is summable since $\delta < p$. Thus $$\sum_k \mu(\Omega_0(k))<\infty.$$

\subsubsection{The second shortest saddle connection is longer than a power of the shortest, and the shortest is not a cylinder curve.}\label{sec:omega1} Let $\Omega_1(k)$ be the set of surfaces $(X,\omega)$ such that the shortest saddle connection $\gamma$ is not on the boundary of a cylinder and $$\widehat {f}(X,\omega)\geq \sqrt{k},\ \epsilon(X,\omega)> |\gamma|^p.$$
Since $\gamma$ is shortest, the saddle connection(s) making up $\gamma$ are edges of the Delaunay triangulation. Then by Lemma~\ref{lemma:5crossing} the path $\beta$ when written as a path in the Delaunay triangulation may follow $\gamma$ successively at most $2M$ times, but then must follow some other edge. We are then reduced to the argument in \S\ref{sec:noshortloops} to give
$$\widehat f(X,\omega) =O\left(\epsilon^{-N}\right)=O\left(|\gamma|^{-Np}\right).$$
For this to be bigger than $\sqrt k$ have $$|\gamma|=O\left(k^{-\frac{1}{2Np}}\right).$$ Thus 
$$\mu(\Omega_1(k))= 
O(|\gamma|^2)=O\left(k^{-\frac{1}{Np}}\right)$$ which is summable since $Np < 1$ and again we have $$\sum_k\mu(\Omega_1(k))<\infty.$$

\subsubsection{The second shortest saddle connection is longer than a power of the shortest, and the shortest is a cylinder curve.}\label{sec:omega2} Let $\Omega_2(k)$ be the set of $(X,\omega)$ such that 
$$\widehat f(X,\omega)\geq \sqrt k,\ \epsilon(X,\omega)> |\gamma|^p$$ and there is a flat cylinder having $\gamma$ on its boundary.  
There are two cases. The first case is that the height of the cylinder is at most $\epsilon_0$. The shortest saddle connection $\beta'$ crossing the cylinder has a component in the direction of the cylinder of length at most $|\gamma|$ and an orthogonal component of length at most $\epsilon_0$. If we include $\gamma$ and $\beta'$ as part of a collection of saddle connections whose holonomy vectors (or period coordinates) define the Lebesgue measure $\mu$ we see 
$$\mu(\Omega_2(k))=O(\epsilon_0|\gamma|^3)=O(k^{-\frac{3}{2(1+\delta)}}).$$ We are using \cite[proof of Theorem 10.3]{MasurSmillie} for the proof of  the first equality. There the measure of a set of surfaces is being bounded. Since $\delta < \frac 1N$, and $N \geq 2$,  $$\frac{3}{2(1+\delta)} > 1,$$ so again $\mu(\Omega_2(k))$ is summable in $k$. The second case is the height is greater than $\epsilon_0$. 
In this case, we again apply the argument in \S\ref{sec:omega1}, observing that $P(\beta)$ cannot 
follow $\gamma$ successively more than $2M$ times because its length is less than $\epsilon_0$ 
and, by Lemma~\ref{lemma:5crossing}, our saddle connection $\beta$ would have to cross the 
cylinder that has $\gamma$ on its boundary.  
\end{proof}

\begin{Theorem}\label{theorem:main:2} 
Suppose $f$ is the characteristic function of a disc of radius $R$. Then $\widehat f\in L^2(\hh, \mu)$.

\end{Theorem}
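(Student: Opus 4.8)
The plan is to reduce the radius-$R$ case to the already-established radius-$\epsilon_0$ case (Theorem~\ref{theorem:main:1}) using only the $SL(2,\R)$-invariance of $\mu$ together with a soft covering argument. Write $\widehat f = \widehat{\chi_{B(0,R)}}$, where $\chi_E$ denotes the indicator of $E\subset\R^2$. The one structural fact I would exploit is that the $GL(2,\R)$-action on $\hh$ acts linearly on holonomy vectors, i.e. $\La_{g\omega}=g\La_\omega$, whence the equivariance identity $\widehat{\chi_E}(g\omega)=\#(\La_\omega\cap g^{-1}E)=\widehat{\chi_{g^{-1}E}}(\omega)$ holds for every $g\in SL(2,\R)$ and every Borel set $E$.

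First I would establish a covering lemma: the closed disc $B(0,R)$ can be covered by finitely many ellipses $E_1,\dots,E_n$, each of the form $E_i=g_i\cdot B(0,\epsilon_0)$ with $g_i\in SL(2,\R)$. The point is that every such $E_i$ is an origin-centered ellipse of area $\pi\epsilon_0^2$ (since $SL(2,\R)$ fixes the origin and preserves area), and that the collection of all such \emph{open} ellipses already covers $\R^2$: given $v\neq 0$, rotate $v$ onto the positive $x$-axis and apply $\diag{\lambda,\lambda^{-1}}$ with $\lambda<\epsilon_0/|v|$ to exhibit $v$ in the interior of some $g\cdot B(0,\epsilon_0)$, while $0$ lies in every such ellipse. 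Compactness of $B(0,R)$ then yields a finite subcover.

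Given the cover, one has $\chi_{B(0,R)}\le\sum_{i=1}^n\chi_{E_i}$ pointwise, hence $\widehat{\chi_{B(0,R)}}\le\sum_{i=1}^n\widehat{\chi_{E_i}}$. Applying the equivariance identity with $E=B(0,\epsilon_0)$ gives $\widehat{\chi_{E_i}}(\omega)=\widehat{\chi_{B(0,\epsilon_0)}}(g_i^{-1}\omega)$, and the $SL(2,\R)$-invariance of $\mu$ then yields $\|\widehat{\chi_{E_i}}\|_{L^2(\mu)}=\|\widehat{\chi_{B(0,\epsilon_0)}}\|_{L^2(\mu)}$, which is finite by Theorem~\ref{theorem:main:1}. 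By Minkowski's inequality, $\|\widehat{\chi_{B(0,R)}}\|_{L^2(\mu)}\le\sum_{i=1}^n\|\widehat{\chi_{E_i}}\|_{L^2(\mu)}=n\,\|\widehat{\chi_{B(0,\epsilon_0)}}\|_{L^2(\mu)}<\infty$, which is the claim.

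The only place requiring care — and the (mild) obstacle — is that one cannot simply rescale: the naive move would be to carry $B(0,R)$ onto $B(0,\epsilon_0)$ by the homothety $v\mapsto(\epsilon_0/R)v$, but a homothety is not in $SL(2,\R)$, scales area, and therefore does not act on the area-one stratum $\hh$, so it is unavailable. The remedy is to abandon an exact rescaling and instead cover $B(0,R)$ by area-preserving images of the small disc; the price is that the covering ellipses are long and thin and their number grows (of order $R^2/\epsilon_0^2$), but finiteness of $n$ is all that Minkowski's inequality requires. Thus all of the genuine analytic difficulty stays concentrated in Theorem~\ref{theorem:main:1}, and the passage to arbitrary radius is purely an invariance argument.
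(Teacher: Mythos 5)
Your proposal is correct and is essentially the paper's argument: the paper covers $B(0,R)$ by $O(R^2/\epsilon_0^2)$ thin sectors and maps each into the $\epsilon_0$-disc by $g_{t_0}r_{-\theta_0}\in SL(2,\R)$, which is exactly your covering of $B(0,R)$ by finitely many $SL(2,\R)$-images of $B(0,\epsilon_0)$ combined with equivariance of the transform and $SL(2,\R)$-invariance of $\mu$. Your phrasing via ellipses and Minkowski's inequality is a clean repackaging of the same reduction to Theorem~\ref{theorem:main:1}.
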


\begin{proof}
We cover the disc of radius $R$ with sectors  of angle $\frac{\epsilon_0^2}{R^2}$.
It is enough to show that for $f$ the  characteristic function of any of these sectors the function $\widehat f$ is in $L^2(\hh,\mu)$.  
Let $\theta_0$ the center angle of this sector.  Let $t_0=\log \frac{R}{\epsilon_0}$.  Let $(X,\omega)$ any translation surface  and  consider $(Y,\omega')=g_{t_0}r_{-\theta_0}(X,\omega)$. That is, we rotate so direction $\theta_0$ is vertical and flow time $t_0$. Then since the angle is $\frac{\epsilon_0^2}{R^2}$, every saddle connection of $(X,\omega)$ 
in that sector has length at most $\epsilon_0$ on $(Y,\omega')$.  Let $h$ be the characteristic function of the disc of radius $\epsilon_0$.   Then since the flow is $\mu$ measure preserving, 
$$\int (\widehat f) ^2(X,\omega) d\mu(X)\leq\int (\widehat h)^2(Y,\omega')d\mu(Y)<\infty.$$
\end{proof}

\subsection{Proof of Theorem~\ref{theorem:main}} Let $f \in B_c(\R^2)$. then there is an $R >0$ so that the support of $f$ is contained in $B(0, R)$, and letting $C = \max f$, we have $$f \le C \chi_{B(0, R)},$$ so $$\widehat{f} \le C \widehat \chi_{B(0, R)}.$$ Applying Theorem~\ref{theorem:main:2}, we have our result.\qed
\medskip

\subsection{Optimizing exponents}\label{sec:optimizing} In fact, our proof shows the following:

\begin{Theorem}\label{theorem:optimalexponent} Let $f \in B_c(\R^2)$, Then for any $q < 1+ \sqrt{1+\frac 4 N}$, $$\widehat{f} \in L^q(\hh, \mu).$$ Here $N= 2g+|\Sigma|-1$ is the dimension of the relative homology $H_1(X, \omega, \Sigma)$ of surfaces in $\hh$. \end{Theorem}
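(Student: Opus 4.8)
The plan is to observe that the proof of Theorem~\ref{theorem:main} never genuinely used the value $q=2$: it only used summability of a sequence of tail measures. So I would re-run the proof of Theorem~\ref{theorem:main:1} essentially verbatim, but with the threshold $\sqrt{k}$ replaced by an arbitrary level $t$, carrying the parameters $\delta$ and $p$ along, and then optimize over them at the end.

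First I would reduce to the indicator of a small disk exactly as in the proof of the main theorem. For $f\in B_c(\R^2)$ with $f\le C\chi_{B(0,R)}$ we have $\widehat f\le C\,\widehat{\chi_{B(0,R)}}$, so $\|\widehat f\|_{L^q}\le C\|\widehat{\chi_{B(0,R)}}\|_{L^q}$; and the sector/rotate-flow reduction of Theorem~\ref{theorem:main:2} uses only measure-preservation of $g_{t_0}r_{-\theta_0}$ together with domination, both insensitive to $q$, together with the triangle inequality in $L^q$ over the finitely many sectors. Hence it suffices to prove $\widehat h\in L^q$ for $h=\chi_{B(0,\epsilon_0)}$. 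Next I would use the layer-cake criterion $\int\widehat h^{\,q}\,d\mu=q\int_0^\infty s^{q-1}\mu\{\widehat h>s\}\,ds$, so that a tail bound $\mu\{\widehat h>t\}=O(t^{-\alpha})$ yields $\widehat h\in L^q$ precisely when $q<\alpha$ (the integral near $0$ is harmless since $\mu(\hh)<\infty$).

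I then redo the decomposition $\hh_3=\Omega_0\cup\Omega_1\cup\Omega_2$ with the level $t$ in place of $\sqrt k$, keeping $\delta,p$ free. For $t$ large, $\{\widehat h>t\}$ lies in $\hh_3$, since on the thick and no-short-loops parts $\widehat h$ is bounded. Tracking the three estimates from the proof of Theorem~\ref{theorem:main:1}: the Eskin--Masur bound (\ref{eq:eskinmasur}) gives $|\gamma|=O(t^{-1/(1+\delta)})$ on $\{\widehat h>t\}$, so the two-short-connections estimate gives $\mu(\Omega_0)=O(t^{-(2+2p)/(1+\delta)})$; the Chew/no-short-loops argument on $\Omega_1$ gives $\widehat h=O(|\gamma|^{-Np})$, hence $\mu(\Omega_1)=O(t^{-2/(Np)})$; and the cylinder estimate on $\Omega_2$ gives $\mu(\Omega_2)=O(t^{-3/(1+\delta)})$, with its second subcase (tall cylinder) again reduced via Lemma~\ref{lemma:5crossing} to the $\Omega_1$ bound. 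Thus $\mu\{\widehat h>t\}=O(t^{-\alpha})$ with
\[
\alpha=\min\left\{\frac{2+2p}{1+\delta},\ \frac{2}{Np},\ \frac{3}{1+\delta}\right\}.
\]

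Finally I would optimize. Letting $\delta\to0^+$ and treating $p\in(0,1/N)$ as the free variable, I maximize $\min(2+2p,\ 2/(Np),\ 3)$. The first term increases and the second decreases in $p$, and they meet where $2+2p=2/(Np)$, i.e. $Np^2+Np-1=0$, giving $p_*=\tfrac12\bigl(-1+\sqrt{1+4/N}\bigr)$ and common value $1+\sqrt{1+4/N}=:q_*$. Since $N\ge 2$ forces $4/N\le 2$ and hence $q_*<3$, the cylinder constraint $\alpha_2\to 3$ is slack; and a direct check gives $p_*<1/N$, so $p_*$ is admissible. Therefore for any $q<q_*$ I can choose $\delta>0$ small and $p$ near $p_*$ so that $\alpha>q$, making every tail sum converge and yielding $\widehat f\in L^q$. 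The only real content is the bookkeeping and this one-variable optimization, since all the geometry (Lemma~\ref{lemma:5crossing}, the Eskin--Masur and Masur--Smillie estimates) is already in hand; the point to be careful about is that the competing exponents from $\Omega_0$ and $\Omega_1$ are exactly the ones that balance to give $q_*$, while the $\Omega_2$ exponent never becomes the bottleneck.
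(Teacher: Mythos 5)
Your proposal is correct and follows essentially the same route as the paper: reduce to the indicator of a small disc via the sector/flow argument, re-run the $\Omega_0,\Omega_1,\Omega_2$ decomposition with a general level to get the tail exponents $\frac{2+2p}{1+\delta}$, $\frac{2}{Np}$, $\frac{3}{1+\delta}$, and then optimize by sending $\delta\to 0$ and balancing the first two constraints at $Np(1+p)=1$, yielding $q_c=1+\sqrt{1+4/N}$. The only cosmetic difference is that you use the continuous layer-cake integral where the paper sums $\mu\{\widehat f\geq k^{1/q}\}$ over integers $k$; these are equivalent and give identical exponents.
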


\begin{proof} We need to show the measure $\mu_k$ of the set of surfaces $(X, \omega)$ with $$\widehat{f} \geq k^{1/q}$$  is summable in $k$.  The same argument as (\ref{eq:eskinmasur2}) shows that if $\widehat f(X,\omega)\geq   k^{1/q}$, there is a $c$ so that \begin{equation}\label{eq:eskinmasurq}|\gamma|\leq ck^{-\frac{1}{q(1+\delta)}}.\end{equation}

\noindent As before, we need to understand the set $\hh_3$. We use the partition $\hh_3 = \Omega_0 \sqcup \Omega_1 \sqcup \Omega_2$ in \S\ref{sec:decomposition}. Following \S\ref{sec:omega0}, the contribution $\mu_{k,0} $ to $\mu_k$ from $\Omega_0$ satisfies $$\mu_{k,0} <  O(|\gamma|^{2+2p}) = O\left( k^{-\frac{2(1+p)}{q(1+\delta)}}\right).$$ For this to be summable, we need $q < 2 \frac{1+p}{1+\delta}$.  Following \S\ref{sec:omega1}, the contribution $\mu_{k,1}$ from $\Omega_1$ satisfies $$\mu_{k, 1} = O(|\gamma|^2) = O\left(k^{-\frac{2}{qNp}}\right).$$ For this to be summable we need $q < \frac{2}{Np}$. Following \S\ref{sec:omega2}, the contribution $\mu_{k_2}$ from $\Omega_2$ satisfies $$\mu_{k,2} = O(\epsilon_0|\gamma|^3)=O(k^{-\frac{3}{q(1+\delta)}}).$$ For this to be summable we need $q < \frac{3}{1+\delta}$. Combining these estimates, we need $$q < \min \left( \frac{3}{1+\delta}, 2 \frac{1+p}{1+\delta}, \frac{2}{Np}\right),$$ for some $0 < \delta < p < 1/N$. To optimize this estimate, we put $\delta = 0$, and note that the first term is always the largest, so we need to compute $$\max_{0 < p < \frac 1 N} \min\left(2(1+p), \frac{2}{Np}\right) = 2\max_{0 < p < \frac 1 N} \min\left((1+p), \frac{1}{Np}\right).$$ Since $1+p$ is increasing in $p$ and $\frac {1}{Np}$ is decreasing in $p$, we set these two equal to each other, and obtain $$Np(1+p) = 1.$$ Solving this yields $$p_c = \frac{-1 + \sqrt{1+ \frac 4 N }}{2},$$ and so for any $$q < q_c = 2(1+p_c) = \frac{2}{Np_c} = 1 + \sqrt{1+ \frac 4 N },$$ we have $\widehat{f} \in L^q$, as desired. We do not know if this is truly the optimal exponent, but it is the best exponent our proof yields.\end{proof}

\subsection{Rank one orbit closures}\label{sec:rankone} Wright \cite {Wright} showed that any $(X,\omega)$ in a rank $1$ orbit closure in $\hh$ is completely periodic.  Completely periodic means that for any  direction with a  cylinder $\gamma$, the surface can be written as a union of cylinders in that direction, and there are  always such cylinder directions (actually a dense set).  Thus  the set of $(X,\omega)$ with $\widehat{f}\geq \sqrt k$ coincides with  the set $\Omega_0(k)$.  Nguyen~\cite[Proposition 4.3]{Nguyen}  proved  that for any ergodic $SL(2,\R)$ invariant measure $\nu$  on a rank $1$ orbit closure, and  any such cylinder $\gamma$,  $$\nu(\Omega_0(k))=O(|\gamma|^3).$$
Together with the discussion in the cylinder case in the proof of Theorem~\ref{theorem:main:1}, this gives $\widehat{f}\in L^2(\nu)$ for any such $\nu$ (in fact, in $L^q(\nu)$ for any $q < 3$).

\section{Siegel-Veech measures}\label{sec:spectral} In this section, we prove Theorem~\ref{theorem:spectral}, and give examples of the resulting Siegel-Veech measures in some special cases.

\subsection{Transforms and bounds} Let $\tau$ denote an $SL(2, \R)$ invariant measure on a stratum $\mathcal H$ of abelian differentials, and suppose that for any $f \in B_c(\R^2)$, $\widehat{f} \in L^2(\hh, \tau)$. Then, for any $h \in B_c(\R^4)$, $\widehat{h} \in L^1(\hh, \tau)$, since we can dominate  $$\widehat{h}(\omega) = \sum_{v_1, v_2 \in \Lambda_{\omega}} h(v_1, v_2)$$ by $(\widehat{f})^2$ where $f = \|h\|_{\infty} \chi_{H}$, where $H$ denotes the union of the projections of the support of $h$ via the maps $\R^4 = \R^2 \times \R^2 \rightarrow \R^2$ $$(x,y) \longmapsto x \mbox{ and } (x, y) \longmapsto y.$$

\subsection{Haar measures} Our claim in Theorem~\ref{theorem:spectral} is that there are \emph{Siegel-Veech measures} $\nu = \nu(\tau)$ on $\R\minuszero$ and $\eta = \eta(\tau)$ on $\mathbb P^1 (\R) =  \R \cup \infty$ such that \begin{eqnarray*} \int_{\mathcal H} \widehat{h}(\omega) d\tau(\omega) &=& \int_{\R\minuszero} \left(\int_{SL(2, \R)} h(tx, y) d\kappa(x,y) \right) d\nu(t) \\ &+ &\int_{\mathbb P^1(\R)} \left(\int_{\R^2} h(x, sx) dx\right) d\eta(s).\\ \end{eqnarray*} Here, $\kappa$ is Haar measure on $SL(2, \R)$ (with a fixed normalization), and the integral over $SL(2, \R)$ is taken over pairs $(x,y) \in \R^2 \times \R^2$ with $\det(x,y)=1$, that is, we view $SL(2, \R)$ as a subset of $\R^4 = \R^2 \times \R^2$.

This follows by the $SL(2, \R)$-invariance of $\tau$ and the classification of $SL(2, \R)$-orbits on $\R^4$. By the invariance of $\tau$, and our integrability condition, $$h \longmapsto \int_{\mathcal H} \widehat{h}(\omega)d\tau(\omega)$$ is a $SL(2, \R)$-invariant linear functional on $B_c(\R^4)$, the set of bounded compactly supported functions on $\R^4$. Therefore, there is an $SL(2, \R)$-invariant measure $m = m(\tau)$ (a \emph{Siegel-Veech measure}) on $\R^4 = \R^2 \times \R^2 = M_2(\R)$  so that $$\int_{\mathcal H} \widehat{h}(\omega)d\tau(\omega)= \int_{\R^4} h dm$$

\subsection{$SL(2,\R)$-invariant measures on $\R^4$} To describe $SL(2,\R)$-invariant measures on $\R^4$, we need to understand $SL(2, \R)$-orbits on $\R^4$. For $t \in\R$, let $$D_t = \{ (x, y) \in 
\R^2 \times \R^2: \det(x, y) = t\}$$

\noindent For $t \neq 0$, $D_t$ is an $SL(2, \R)$-orbit. $D_0$ decomposes further. For $s \in \mathbb P^1(\R)$, let  $$L_s = \{ (x, sx): x \in \R^2, x \neq 0\},$$ with $$L_{\infty} = \{ (0, y): y \in \R^2, y \neq 0\}$$

\subsubsection{Orbits and measures} $D_t$ and $L_s$ are the non-trivial $SL(2,\R)$ orbits on $\R^2 \times \R^2$, and each carries a unique (up to scaling) $SL(2, \R)$-invariant measure. These are the (non-atomic) ergodic invariant measures for $SL(2,\R)$ action on $\R^2 \times \R^2$. On $D_t$, the measure is Haar measure on $SL(2, \R)$, and on $L_s$ it is Lebesgue on $\R^2$. Thus, associated to any $SL(2,\R)$ invariant measure $m$ on $\R^4$ we have measures $\nu = \nu(m)$ and $\eta = \eta(m)$ so that \begin{eqnarray*} \int_{\R^4} h dm &=& \int_{\R\minuszero} \left(\int_{SL(2, \R)} h(tx, y) d\kappa(x,y) \right) d\nu(t) \\ &+& \int_{\mathbb P^1(\R)} \left(\int_{\R^2} h(x, sx) dx\right) d\eta(s).\\ \end{eqnarray*} 

\subsection{Siegel-Veech measures from measures on strata} Putting $\nu(\tau) = \nu(m(\tau))$ and $\eta(\tau) = \eta( m(\tau))$, we have our \emph{Siegel-Veech measures}. These measures are interesting invariants associated to $SL(2, \R)$-invariant measures $\tau$ on $\mathcal H$.  A natural question is:

\begin{question*} Let $\mu$ denote Lebesgue measure on the stratum $\hh$. What are the Siegel-Veech measures $\nu(\mu)$ and $\eta(\mu)$?
\end{question*}

\subsubsection{Virtual Triangles}\label{sec:virtual} Smillie-Weiss~\cite{SmillieWeiss} introduced the notion of \emph{virtual triangles} on a surface. A virtual triangle is simply a pair of (distinct) saddle connections, and the \emph{area} of a virtual triangle is the (absolute value of the) determinant of the matrix given by the holonomy vectors. 

They showed that there is a positive lower bound on the area of virtual triangles on the surface $\omega$ if and only if the surface $\omega$ is an \emph{lattice surface}, that is, its stabilizer $SL(X, \omega)$ under $SL(2, \R)$ is a lattice. In this case the $SL(2, \R)$ orbit is \emph{closed}, and the Haar measure $\tau$ on $SL(2,\R)/SL(X,\omega)$ is finite.

This condition, known as \emph{no small virtual triangles} (NSVT) can be summarized as saying that $\nu(\tau)$ has no support in a neighborhood of $0$. More generally, given an arbitrary $SL(2,\R)$-invariant measure $\tau$, the support of $\nu(\tau)$ is the collection of virtual triangle areas for surfaces $\omega$ in the support of $\tau$.

\subsection{Lattice surfaces and covering loci}
For some lattice surfaces and loci of covers, we have examples where we can compute these measures explicitly. 

\subsubsection{Flat tori}\label{sec:flat} For $$\mathcal H(\emptyset) = SL(2, \R)/SL(2, \Z),$$ the moduli space of abelian differentials on flat tori, these measures were implicitly computed by Schmidt~\cite{Schmidt}, see Fairchild~\cite{Fairchild} for an explicit computation with full proofs (which correct a mistake in a paper of Rogers~\cite{Rogers}). Precisely, if the Haar measure $\kappa$ on $SL(2, \R)$ is normalized so that $$\kappa(SL(2, \R)/SL(2, \Z)) = \zeta(2),$$ we have $$\nu = \sum_{n \in \Z\minuszero} \frac{1}{\zeta(2)} \phi(n) \delta_n \mbox{ and } \eta = \delta_1 + \delta_{-1}.$$ 

\subsubsection{Covering loci and affine lattices}\label{sec:cover} In the stratum $\hh(1, 1)$ we have the subvariety $\mathcal V$ of two identical tori glued along a slit. These are double covers of a flat torus branched over two points. $\mathcal V$ is a degree 4 cover of $\hh(0, 0)$, the space of tori with two distinct marked points, where the slit is a segment connecting the two points. The cover has degree $4$ since a pair of slits joining the same marked points determines the same point in $\hh(1,1)$  if the slits are in the same relative homology class mod $2$; and the relative homology group with $\Z_2$ coefficients has order $4$.  Further, $$\hh(0, 0) = A^*SL(2, R)/A^*SL(2,\Z),$$  where $$A^*SL(2, \R) = SL(2,\R) \ltimes \R^2\minuszero, A^*SL(2, \Z) = SL(2, \Z) \ltimes \Z^2\minuszero.$$ Then $(X,\omega)\in\mathcal{V}$ covers a point which we identify  as $[g, v]$, $g \in SL(2, \R)$, $0 \neq v \in \R^2/g\Z^2$.  For $(X,\omega)$ in $\mathcal{V}$ for which the holonomy vector of the slit is totally irrational (a set of full measure), we have~\cite{CheungHubertMasur} $$\La_\omega = g\Z^2_{\prim} \cup (g\Z^2 + v),$$ where $\Z^2_{\prim}$ is the set of \emph{primitive} integer vectors. Thus, we can break the Siegel-Veech measures up into the measures associated to each piece. For the first piece $g\Z^2_{\prim}$, the computation in \S\ref{sec:flat} yields the measures, and for the second, these were computed in~\cite{Athreya}.

\subsubsection{Lattice surfaces} More generally, for lattice surfaces $\omega$, it seems possible to use the fact that there are vectors $v_1, \ldots v_k \in \R^2$ so that $$\La_{\om} = \bigcup_{j=1}^k SL(X,\omega) v_j$$ to turn to algebraic techniques to compute these measures, which will depend on the action of $SL(X, \omega)$ on $\R^2 \times \R^2$. 


\section{$L^2$ bounds and error terms}\label{sec:errorbounds}

\subsection{Notation} We prove Theorem~\ref{theorem:error}. We write $$L(R) = \|N(\omega, R)\|_2^2,$$ and $$V(R) = L(R) - \|N(\omega, R)\|_1^2 = L(R) - c_{SV}(\mu)^2 \pi^2 R^4.$$

\subsection{Expectation and variance}

Then 

\begin{eqnarray*} \mu\left(\omega \in \hh:  \left|N(\omega, R) - c_{SV}(\mu) \pi R^2\right| > e(R)\right) &=& \\ \mu\left(\omega \in \hh:  \left|N(\omega, R) - c_{SV}(\mu) \pi R^2\right|^2 > e(R)^2\right) &\le& \\ \frac{1}{e(R)^2}\int_{\hh} \left|N(\omega, R) - c_{SV}(\mu)  \pi R^2\right|^2 d\mu  &=&  \frac{V(R)}{e(R)^2}.\end{eqnarray*}

\subsection{Borel-Cantelli}\label{sec:bc} Theorem~\ref{theorem:error} then follows from applying the easy part of the Borel-Cantelli lemma to the sequence of sets $$A_k = \left\{\omega \in \hh: \left|N(\omega, R) - c_{SV}(\mu) \pi R^2\right| \right\}> e(R_k).$$\qed

\subsection{Power savings on lacunary sequences}\label{sec:powersavings} We show how to combine the main result Theorem~\ref{theorem:variance} of Appendix~\ref{sec:appendix} with Theorem~\ref{theorem:error} to obtain almost everywhere power savings along a lacunary sequence of radii $R_k$ (i.e., there is a $c>1$ so that $\frac{R_{k+1}}{R_k} \geq c$. ) The results of Nevo-R\"uhr-Weiss~\cite{NRW} give this quality of bound for $R \rightarrow \infty$ in general.  Theorem~\ref{theorem:variance} yields a $\delta>0$ so that we have the bound $$V(R) = o( R^{4-\delta}).$$ Setting  $e(R_k) = R_k^{2-\eta}$, $\eta < \delta$, and applying Theorem~\ref{theorem:error}, we obtain:

\begin{Cor} Let $R_k$ be a lacunary sequence. Then for $\mu$-almost every $\omega \in \hh$, $$|N(\omega, R_k) - c_{SV}(\mu)\pi R_k^2| \le R_k^{2-\eta}.$$

\end{Cor}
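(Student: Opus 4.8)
The plan is to feed the variance bound of Theorem~\ref{theorem:variance} into the Borel--Cantelli criterion of Theorem~\ref{theorem:error}. By Theorem~\ref{theorem:variance} there is a $\delta>0$ with $V(R)=o(R^{4-\delta})$; this power-savings estimate is the substantive input, and I take it as given. Everything else is a matter of choosing the error function $e$ so that the hypothesis of Theorem~\ref{theorem:error} holds along the lacunary sequence $R_k$.

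Concretely, I would set $e(R_k)=R_k^{2-\eta}$ and estimate
$$\frac{V(R_k)}{e(R_k)^2}=\frac{o(R_k^{4-\delta})}{R_k^{4-2\eta}}=o\!\left(R_k^{2\eta-\delta}\right).$$
Taking $\eta$ small enough that $2\eta-\delta<0$ (so any $\eta<\delta/2$ suffices), the exponent $s:=\delta-2\eta$ is strictly positive. The lacunarity hypothesis then gives summability: from $R_{k+1}/R_k\ge c>1$ we get $R_k\ge R_0 c^k$, so $R_k^{-s}\le R_0^{-s}(c^{-s})^k$ is dominated by a convergent geometric series, whence $\sum_k V(R_k)/e(R_k)^2<\infty$.

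With this verified, Theorem~\ref{theorem:error} applies directly: it yields a $\mu$-conull set of $\omega\in\hh$ and, for each such $\omega$, an index $k_0(\omega)$ beyond which $|N(\omega,R_k)-c_{SV}(\mu)\pi R_k^2|<e(R_k)=R_k^{2-\eta}$, which is exactly the asserted bound. The almost-everywhere nature of the conclusion and the $\omega$-dependence of $k_0$ are inherited from the Borel--Cantelli step and cost nothing extra.

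I expect no real obstacle inside the corollary itself, which is a routine Chebyshev-plus-Borel--Cantelli argument; the only point requiring care is matching $\eta$ to $\delta$ so that lacunarity---rather than a stronger bound along the full sequence---secures convergence of the series. The genuine difficulty is upstream, in establishing the variance estimate $V(R)=o(R^{4-\delta})$ of Theorem~\ref{theorem:variance}, which rests on the $L^2$ bounds of Theorem~\ref{theorem:main} and the finer second-moment analysis carried out in the appendix.
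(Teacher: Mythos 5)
Your proof is correct and follows exactly the paper's route: feed the variance bound $V(R)=O(R^{4-\delta})$ from Theorem~\ref{theorem:variance} into the Borel--Cantelli criterion of Theorem~\ref{theorem:error} with $e(R_k)=R_k^{2-\eta}$, using lacunarity to reduce $\sum_k R_k^{2\eta-\delta}$ to a convergent geometric series. You are in fact slightly more careful than the paper, which states the condition as $\eta<\delta$, whereas your computation correctly identifies $\eta<\delta/2$ as the condition needed for the exponent $2\eta-\delta$ to be negative.
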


\newpage 
\appendix

\section{Variance Estimates \\ \smallskip \small Jayadev S.~ Athreya and Rene R\"uhr}\label{sec:appendix}

\subsection*{Notation} $\hh$ is a connected component of a stratum of area one translation surfaces,  and $N(\omega, R)$ is function $\widehat{\id_{B_R}}(\omega)$ where $B_R$ is the Euclidean disc centered at zero of radius $R$ in $\mathbb C$. The goal of this appendix is to prove the following asymptotic estimate for the $L^2$ norm of the counting function $N(\cdot, R)$. We fix some notation. For $g \in SL(2, \R)$, and a function $f$ on a space $Y$ on which $SL(2, \R)$ acts, we write $$g\acts f(y) = f(g^{-1}y).$$ Our spaces will be either $Y= \R^2$ where $SL(2, \R)$ acts linearly, or $Y = \cH$. We write $c_{\mu}$ to be the Siegel-Veech constant for the Lebesgue probability measure $\mu$ on $\cH$, so that for any bounded compactly supported function $f$ on $\R^2$, the Siegel-Veech formula~\cite{VeechSiegel} yields \begin{equation}\label{eq:sv}\int_{\cH} \widehat{f} d\mu = c_{\mu} \int_{\bR^2}f(x)dx.\end{equation}

\begin{Theorem}\label{theorem:variance} There exists $\delta>0$ such that for any sufficiently large $R$,
\[
\|N(\cdot,R)\|_{L^2(\cH,\mu)}=c_\mu\pi R^2+\cO(R^{2-\delta}).
\]
\end{Theorem}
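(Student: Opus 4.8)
The plan is to work with the second moment $L(R)=\|N(\cdot,R)\|_{L^2(\cH,\mu)}^2=\int_\cH N(\om,R)^2\,d\mu(\om)$ and to prove $L(R)=c_\mu^2\pi^2R^4+\cO(R^{4-\delta})$; the stated estimate then follows upon taking square roots, since $\big(c_\mu^2\pi^2R^4+\cO(R^{4-\delta})\big)^{1/2}=c_\mu\pi R^2+\cO(R^{2-\delta})$. To bring in dynamics I would first \emph{unfold} the radial count in polar coordinates. Writing $g_t=\diag{e^t,e^{-t}}$ and letting $k_\theta\in\SOR[2]$ denote rotation by $\theta$, I fix a bounded compactly supported $f_0$ adapted to the expanding direction of $g_t$ and relate $N(\om,R)=\widehat{\chi_{B_R}}(\om)$ to the circle average $\int_0^{2\pi}\widehat{f_0}(g_{\log R}k_\theta\,\om)\,d\theta$, up to a lower-order error from the boundary annulus $\{v\in\La_\om:|v|\approx R\}$ (controlled by the Siegel--Veech formula~\eqref{eq:sv}) and from smoothing the indicator. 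Crucially $\widehat{f_0}\in L^2(\cH,\mu)$ by Theorem~\ref{theorem:main}, which is exactly what guarantees that the correlation integrals produced below converge.

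Next I would square the circle average and integrate over $\cH$. Using the $g_t$- and $k_\theta$-invariance of $\mu$ to absorb one copy of the flow, the second moment reduces to a double integral over the two angles of the correlation function
\[
\Phi_R(\beta)=\int_\cH \widehat{f_0}(\om)\,\widehat{f_0}\big(g_{\log R}\,k_\beta\,g_{-\log R}\,\om\big)\,d\mu(\om),\qquad \beta=\theta'-\theta.
\]
The conjugated rotation is computed explicitly,
\[
g_{\log R}\,k_\beta\,g_{-\log R}=\smallmat{\cos\beta & -R^2\sin\beta\\ R^{-2}\sin\beta & \cos\beta},
\]
so its upper-right entry has size $R^2$ as soon as $|\beta|$ is bounded away from $0$.

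For such $\beta$ this group element leaves every compact subset of $\SLR$ as $R\to\infty$, so by \emph{effective mixing} of the $\SLR$-action on $(\cH,\mu)$ — equivalently a quantitative decay-of-correlations estimate, which is where a spectral gap / exponential mixing input enters — the correlation factorizes,
\[
\Phi_R(\beta)=\Big(\int_\cH\widehat{f_0}\,d\mu\Big)^2+\cO(R^{-\delta_0})=\Big(c_\mu\!\int_{\bR^2}f_0\,dx\Big)^2+\cO(R^{-\delta_0}),
\]
uniformly for $|\beta|\gtrsim R^{-2+\rho}$, the leading constant being evaluated by~\eqref{eq:sv}. The complementary near-diagonal range $|\beta|\lesssim R^{-2+\rho}$ has angular measure $\cO(R^{-2+\rho})$ and, after integrating in $\theta,\theta'$ and undoing the unfolding, feeds only into the lower-order $\cO(R^{4-\delta})$ term, as does the boundary annulus.

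Assembling the pieces, the factorized main term integrates to $c_\mu^2\pi^2R^4$ while every remaining contribution is $\cO(R^{4-\delta})$ for a $\delta>0$ determined by the mixing rate $\delta_0$, giving $L(R)=c_\mu^2\pi^2R^4+\cO(R^{4-\delta})$. I expect the main obstacle to be this decorrelation step: one must make the factorization genuinely effective with a power-savings rate that is \emph{uniform} over the whole range $R^{-2+\rho}\lesssim|\beta|\le\pi$ of relevant angles, and simultaneously arrange the smoothing width, the near-diagonal transition, and the boundary annulus $\{|v|\approx R\}$ so that none of them overwhelms the savings. Identifying the leading constant as exactly $c_\mu^2\pi^2R^4$ — consistent both with $\|N(\cdot,R)\|_{1}=c_\mu\pi R^2$ and with the Siegel--Veech measure decomposition of Theorem~\ref{theorem:spectral} applied to $h=\chi_{B_R}\otimes\chi_{B_R}$, whose transverse ($D_t$) part supplies the $R^4$ term and whose parallel ($L_s$) part is $\cO(R^2)$ — serves as a useful cross-check but demands careful bookkeeping of the polar normalization.
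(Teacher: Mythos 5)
Your proposal follows the same route as the paper's Appendix~\ref{sec:appendix}: unfold $N(\cdot,R)$ as a circle average $A_R\widehat{\phi}$ of $a_R$-translates (the Eskin--Masur device), and show that $\|A_R\widehat{\phi}\|_{L^2(\cH,\mu)}^2$ converges to $\bigl(\int_\cH\widehat{\phi}\,d\mu\bigr)^2=c_\mu^2$ with a power saving via decay of matrix coefficients from Avila--Gou\"ezel--Yoccoz. Your expansion of $\|A_R\widehat{f_0}\|_2^2$ into the angular correlation integral of $\langle (a_Rr_\beta a_R^{-1})\acts\widehat{f_0},\widehat{f_0}\rangle$, split at $|\sin\beta|\approx R^{-2+\rho}$, is precisely the proof of the paper's Lemma~\ref{operatorbound} (quoted there from Nevo--R\"uhr--Weiss), and your observation that Theorem~\ref{theorem:main} is what makes these correlations finite is exactly where the main body of the paper enters the appendix.

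The one point that does not survive as stated is ``fix a bounded compactly supported $f_0$.'' If the angular width $2\theta$ of your test sector is independent of $R$, the circle average reproduces $\tfrac{\theta_R}{\pi}N(\om,R)$ only up to the vectors in the annulus $R\cos\theta\le|v|\le R$ (the image $a_R^{-1}S$ of the sector is pinched between these two radii), and by the Siegel--Veech formula that discrepancy has $L^1$-norm $\asymp\theta^2R^2$ --- a fixed \emph{proportion} of the main term, not a lower-order error. So the width must itself shrink with $R$. This is why the paper takes a smoothed sector of half-angle $\theta=R^{-\delta}$, sandwiched between two triangles (triangles, unlike sectors, are mapped to triangles by $a_R$, which makes the sandwich exact), with smoothing parameter $\eps=\theta^2$. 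The price is $\cS_K(\widehat{\phi_{\pm,\eps}})=O(\eps^{-1})=O(\theta^{-2})$, so after dividing by $\theta_R^2\asymp\theta^2R^{-4}$ the mixing error becomes $R^{4-2\kappa}\theta^{-6}$, to be balanced against the $O(\theta)$ relative error in the main term. This is the ``arranging'' you anticipate at the end; the resolution is that the test function's width is itself the parameter being optimized, and once that is done your argument and the paper's coincide.
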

\begin{proof}
The key idea in the proof comes from the work of Eskin-Masur~\cite{EskinMasur}: There is a function $\phi \in B_c(\bR^2)$ with $\int_{\bR^2} \phi(x) dx=1$ such that
$$\left(A_R \widehat{\phi}\right) (\omega) := \frac{1}{2\pi} \int_{0}^{2\pi} \widehat{\phi}(a_{R} r_{\theta} \omega)d\theta \approx\frac{1}{\pi R^2}N(\omega, R)$$
where $a_R=\diag{R,R^{-1}}$ and $r_{\theta}$ is counterclockwise rotation by $\theta$. On the other hand,
\[
\|A_R f\|_{L^2(\cH,\mu)}^2=\langle a_R \acts A_R f,f\rangle_{L^2(\cH,\mu)}
\]
and the right hand side can be approximated with effective bounds on matrix coefficients of $L^2(\cH,\mu)$ (effective mixing of the $a_R$-action) so that $$\|A_R f\|_{L^2(\cH,\mu)}^2\approx \left(\int_{\cH} f d\mu\right)^2.$$
(\ref{eq:sv}) gives  $$\int_{\cH}\widehat{\phi}d\mu = c_\mu \int_{\bR^2} \phi(x) dx = c_{\mu},$$ so we deduce $$\|N(\omega, R)\|_{L^2(\cH,\mu)}\approx c_\mu \pi R^2.$$

\noindent\textbf{Sectors.} We now make those estimates precise, recalling the argument of Nevo-R\"uhr-Weiss~\cite{NRW}, which in turn is based on ideas of Eskin-Margulis-Mozes~\cite{EMM}, Veech~\cite{VeechSiegel} and Eskin-Masur~\cite{EskinMasur}.
Start by taking an $\epsilon$-approximation $\phi_\eps$ of the characteristic function of a sector of the unit ball symmetric at the $y$-axis with total angle $2\theta$. 
The key observation is that $a_R^{-1}\acts\phi_\eps$ approximates now a sector of the ball of radius $R$ with new angle $$\theta_R=\theta R^{-2}+O(\theta^3 R^{-2}).$$ 
Computing $A_R \phi_\eps (x)$ (i.e., integrating $\phi_\eps(a_R r_{\theta} x)$ in $\theta$ from $0$ to $2\pi$) yields $\theta_R/{\pi}$
if and only if $x$ has length less than $R$, and is 0 otherwise. 
There is a small approximation problem: Dilating a sector by $a_R$ does not give a sector again. 
However, $a_R$ does map triangles to triangles, 
and we approximate the original sector $S$ by two such triangles $W_1\subset S\subset W_2$ with $W_1\subset B_1$ of angle $2\theta$, 
touching at the corners with $\bS^1$, and thus is of height $\cos\theta$ 
and the midpoint of the top of $W_2$ touching $\bS^1$. 
There are smooth bump functions $0\leq \phi_{\pm,\eps}\leq 1$ (see \cite[p12]{NRW}) that approximate $W_1$ from its interior and $W_2$ from the outside respectively, 
and satisfy $$\int_{\R^2} \phi_{\pm,\eps}(x)dx=R^2\theta_R+O(\eps^{1/2}R^2\theta_R),$$ where  $$\|\partial_\theta \phi_{\pm,\eps}\|_{L^\infty(\bR^2)} = O(\eps^{-1}).$$ 
This discussion is summarized in the following chain of inequalities:
\begin{eqnarray*}
A_R\widehat{\phi_{-,\eps}}(\omega) &\leq& A_R\widehat{\mathbbm{1}_{W_1}}(\omega) \leq \frac{\theta_R}{\pi}N(\omega, R)\\
&\leq& A_R\widehat{\mathbbm{1}_{W_2}}(\omega)\leq A_R\widehat{\phi_{+,\eps}}(\omega).
\end{eqnarray*}

\noindent\textbf{Effective equidistribution.} We need two definitions before stating our key lemma. We say $f \in L^2(\cH, \mu)$ is $K$-\emph{finite} if the span of the set of functions $\{r_{\theta}\acts f: 0 \le \theta < 2\pi\}$ is finite-dimensional. We define the $K$-\emph{Sobolev norm} $\cS_{K}(f)$ of such a function by $\cS_K^2(f)=\|f\|_{L^2(\cH,\mu)}^2+\|\eta f\|_{L^2(\cH,\mu)}^2$ on $L^2(\cH,\mu)$ where $\eta$ is a generator of $\Lie(\bS^1)$, i.e., $\eta$ is a differential operator coming from the action of $r_{\theta}$. We say $f$ is $K$-\emph{smooth} if this norm is finite.

\begin{lemma}
\label{operatorbound}
There exists $\kappa>0$ such that for any $K$-smooth $f\in L^2(\cH,\mu)$ with $\|f\|_\infty\le \cS_K(f)$,
\begin{eqnarray*}
\left(\|A_R f\|_{L^2(\cH,\mu)}^2-\left(\int_{\cH} fd\mu\right)^2\right)^{1/2} &=& \left\|A_R f-\int_{\cH} f d\mu \right \|_{L^2(\cH,\mu)} \\ &\ll& R^{-\kappa}\cS_K(f).
\end{eqnarray*}
\end{lemma}
\begin{proof}
This is Theorem 3.3 in \cite{NRW}. We only recall here that it follows from \cite{AGY}, by which there exists $\kappa>0$ and $C>0$ such that for any $g\in\SLR[2]$ and any $f_1,f_2\in L^2(\cH,\mu)$ that are not constant $K$-eigenfunctions 
\[
|\langle g\cdot f_1,f_2\rangle_{L^2(\cH,\mu)}|\leq C\|f_1\|_{L^2(\cH,\mu)}\|f_2\|_{L^2(\cH,\mu)}\|g\|^{-\kappa}.
\]
By a Fourier decomposition argument, one obtains the claimed inequality for functions with finite $K$-Sobolev norm.
\end{proof}

\noindent\textbf{Completing the proof.} To complete the proof of the theorem, we'll pick $\theta$ to depend on $R$ and $\eps$ to depend on $\theta$ to achieve our desired bound. Note that by equivariance of the Siegel-Veech transform, $$\phi_{\pm,\eps}\leq \id_{B_1} \mbox{ and }\|\partial_\theta \phi_{\pm,\eps}\|_{L^\infty(\bR^2)} = O(\eps^{-1}).$$ We deduce $\cS_K(\widehat{\phi_{+,\eps}}) = O( \eps^{-1})$ (see \cite[Lemma 3.4]{magee2018counting}). Set $\eps = \theta^2$, so $\eps^{1/2}=\theta$. 
Then
\begin{eqnarray*}
\|N(\cdot, R)\|_{L^2(\cH,\mu)}^2 &\leq& \frac{\pi^2}{\theta_R^2}\|A_R\widehat{\phi_{+,\eps}}\|_{L^2(\cH,\mu)}^2 \\ 
&=& \frac{\pi^2}{\theta_R^2}\left(\left(\int_{\cH}\widehat{\phi_{+,\eps}}d\mu\right)^2+O(R^{-2\kappa}\cS_K(\widehat{\phi_{+,\eps}})^2)\right)\\
&=& \frac{\pi^2}{\theta_R^2}\left((c_\mu R^2\theta_R+O(\theta R^2\theta_R))^2+O(R^{-2\kappa}\theta^{-4})\right)\\
&=& \pi^2R^4\left(c_\mu^2+O(\theta)\right)+O(R^{4-2\kappa}\theta^{-6}).\\
\end{eqnarray*}

\noindent The lower bound follows verbatim. Choose $\delta>0$ and put $\theta=R^{-\delta}$ to match error terms and to finish the proof of the theorem.
\end{proof}

 \newpage


\begin{thebibliography}{99}



\bibitem {Athreya} J.~S.~Athreya, \textit{Random Affine Lattices}, Contemporary Mathematics, volume 639, 160-174, 2015.

\bibitem{AthreyaChaika} J. S. Athreya and J. Chaika, \textit{The distribution of gaps for saddle connection directions.}
Geometric and Functional Analysis,Volume 22, Issue 6, 1491-1516, 2012.

\bibitem{AMargulis} J.~S.~Athreya and G.~A.~Margulis, \textit{Logarithm laws for unipotent flows, I}, Journal of Modern Dynamics, volume 3, number 3, pages 359-378, July 2009.

\bibitem{AMargulis2} J.~S.~Athreya and G. Margulis, \textit{Values of Random Polynomials at Integer Points}, preprint.


\bibitem{AGY}
{\sc A.~Avila, S.~Gou{\"e}zel, and J.-C. Yoccoz}, {\em Exponential mixing for
  the {T}eichm\"uller flow}, Publ. Math. Inst. Hautes \'Etudes Sci.,  (2006),
  pp.~143--211.

\bibitem{CheungHubertMasur} Y.~Cheung, P.~Hubert and H.~Masur, \textit{Dichotomy for the Hausdorff dimension of the set of nonergodic directions.} Invent. Math. 183 (2011), no. 2, 337-383.

\bibitem{Chew} P.~Chew, \textit{There is a Planar Graph Almost as Good as the Complete Graph}, Proceedings of 2nd Symposium on Computational Geometry, Yorktown Heights, NY, 1986.

\bibitem{Dozier} B.~Dozier, \textit{Equidistribution of saddle connections on translation surfaces}, to appear, Journal of Modern Dynamics.


\bibitem{Eskinhandbook} A.~Eskin, \textit{Counting Problems in moduli space.}
Handbook of dynamical systems. Vol. 1B, 581--595, Elsevier B. V., Amsterdam, 2006. 

\bibitem{EskinMasur} A.~Eskin and H.~Masur, \emph{Asymptotic Formulas on Flat
Surfaces}, Ergodic Theory and Dynam.  Systems, v.21, 443-478, 2001.


\bibitem{EMM} A.~Eskin, G.~Margulis, and S.~Mozes, \textit{Upper Bounds and Asymptotics in a Quantitative Version of the Oppenheim Conjecture}, Annals of Mathematics, 147 (1998), 93-141

\bibitem{EMiMo} A.~Eskin, M.~Mirzakhani, and A.~Mohammadi, \textit{Isolation, equidistribution, and orbit closures for the $SL(2,\mathbb R)$ action on moduli space}, Volume 182, 673-721 Volume 182 (2015).

\bibitem{Rafi2}  A.~Eskin, M.~Mirzakhani and K.~Rafi, \textit{Counting geodesics in a stratum.} To appear, Inventiones Mathematicae.

\bibitem{Fairchild} S.~Fairchild, A higher moment formula for the Siegel?Veech transform over quotients by Hecke triangle groups



\bibitem{KMS} S.~Kerckhoff, H.~Masur and J.~Smillie, \textit{Ergodicity of billiard flows and quadratic differentials.} {Ann. of Math. (2)}  124  (1986),  no. 2, 293-311.

\bibitem{KZ} M. Kontsevich, A. Zorich, \textit{Connected components of the moduli spaces of Abelian differen-
tials with prescribed singularities}, Invent. Math., 153 (2003), no.3, 631-678.


\bibitem{magee2018counting}
{\sc M.~Magee, R.~R{\"u}hr, and R.~Guti{\'e}rrez-Romo}, {\em Counting saddle
  connections in a homology class modulo $ q$}, arXiv preprint
  arXiv:1809.00579,  (2018).
  
\bibitem{MasurCounting} H.~Masur, \emph{The growth rate of trajectories of a quadratic
differential}, Ergodic Theory Dynam.  Systems 10 (1990), no.  1, 151--176.

\bibitem{MasurErgodic} H.~Masur, \emph{Interval exchange transformations and
measured foliations}, Ann.  of Math.  (2) 115 (1982), no.  1, 169--200.

\bibitem{MasurSmillie} H.~Masur and J.~Smillie, \textit{Hausdorff dimension of sets of nonergodic measured foliations.} Annals of Mathematics, (2)134 (1991), no. 3, 455-543.

\bibitem{NRW} A.~Nevo, R.~R\"uhr, and B.~Weiss, \textit{Effective counting on translation surfaces}, preprint, \verb+arXiv:1708.06263+
 
\bibitem{Nguyen} D. Nguyen, \textit{Volume of the set of surfaces with small saddle connection in rank one affine manifolds}, preprint, \verb+arXiv:1211.7314+

\bibitem{Rafi1} K.~Rafi  Hyperbolicity in Teichmüller space.
Geometry  Topology 18-5 (2014) 3025--3053. 


\bibitem{Rogers} C.~A.~Rogers, \textit{The number of lattice points in a set}, Proc. London Math. Soc. (3) 6 (1956), 305--320. 
%
%
\bibitem{Schmidt} W. Schmidt, A metrical theorem in geometry of numbers. Transactions of the American Mathematical Society (1960), pp. 516-529. 

\bibitem{Siegel} C. L. Siegel, \textit{A mean value theorem in geometry of numbers}, Ann. Math. 46, 340--347 (1945).

\bibitem{SmillieWeiss} J.~Smillie and B.~Weiss, \emph{Characterizations of
lattice surfaces}, Invent.  Math.  180 (2010), no.  3, 535--557.

\bibitem{VeechSiegel}  W.~Veech, \textit{Siegel Measures}, Annals of Mathematics, 148, (1998), 895-944.


\bibitem{VeechEisenstein} W.~Veech, \textit{Teichm\"uller curves in moduli space, Eisenstein series and an application to triangular billiards}, Invent. math. 97, 553-583 (1989).

\bibitem{VeechErgodic} W.~A. Veech.
\newblock Gauss measures for transformations on the space of interval exchange
  maps.
\newblock {\em Ann. of Math. (2)}, 115(1):201--242, 1982.


\bibitem{Wright}  A. Wright
\newblock Cylinder deformations in orbit closures of translation surfaces,
\newblock {\em Geometry Topology} (19) 413-438, 2015.


\bibitem{Zorichsurvey} A.~Zorich, \textit{Flat surfaces}, Frontiers in number theory, physics, and geometry. I, 437--583, Springer, Berlin, 2006.


%
%
%
%
%
%
%
%
%
%
%
%

%
%
%

\end{thebibliography}
\end{document}